\newtheorem{lem}{Lemma}[section]
\newtheorem{thm}{Theorem}[section]
\newtheorem{conj}{Conjecture}
\newtheorem{clm}{Claim}[section]
\begin{document}
\textwidth 150mm \textheight 225mm
\title{Maximizing the spectral radius of graphs of given size with forbidden a subgraph
\thanks{Supported by the National Natural Science Foundation of China (No.  12271439).}}
\author{{Yanting Zhang$^{a,b}$, Ligong Wang$^{a,b,}$\thanks{Corresponding author.}}\\
\small $^a$ School of Mathematics and Statistics,\\
\small Northwestern Polytechnical University, Xi’an, Shaanxi 710129, P.R. China\\
\small $^b$ Xi’an-Budapest Joint Research Center for Combinatorics,\\ 
\small Northwestern Polytechnical University,  Xi’an, Shaanxi 710129, P.R. China\\
\small E-mail:
yantzhang@163.com (Y. Zhang), lgwangmath@163.com (L. Wang)}
\date{}
\maketitle
\begin{center}
\begin{minipage}{120mm}
\vskip 0.3cm
\begin{center}
{\small {\bf Abstract}}
\end{center}
{\small 
Let $H_7$ denote the $7$-vertex \textit{fan graph} consisting of a $6$-vertex path plus a vertex adjacent to each vertex of the path. Let $K_3 \vee \frac{m-3}{3}K_1$ be the graph obtained by joining each vertex of a triangle $K_3$ to $\frac{m-3}{3}$ isolated vertices. In this paper, we show that if $G$ is an $H_{7}$-free graph with size $m\geq 33$, then the spectral radius $
\rho(G)\leq 1+\sqrt{m-2},$ equality holds if and only if $G\cong K_3 \vee \frac{m-3}{3}K_1$ (possibly, with some isolated vertices).

\vskip 0.1in \noindent {\bf Key Words}:  \ Fan graph; Spectral radius; Size; Extremal graph.
\vskip
0.1in \noindent {\bf AMS Classification}: \ 05C50; 05C35}
\end{minipage}
\end{center}

\section{Introduction }
\label{sec:ch6-introduction}
Let $G$ be an undirected and simple graph with vertex set $V(G)$ and edge set $E(G)$. We use $m:=|E(G)|$ to denote the \textit{size} of $G$. The \textit{adjacency matrix} of $G$ is defined as $A(G)=(a_{u,v})_{u,v\in V(G)}$, where $a_{u,v}=1$ if $uv\in E(G)$, and $a_{u,v}=0$ otherwise. The largest eigenvalue of $A(G)$, denoted by $\rho(G)$, is called the (adjacency) \textit{spectral radius} of $G$.
For two vertex-disjoint graphs $G$ and $H$, $G \cup H$ denotes the \textit{union} of $G$ and $H$, and $G \vee H$ denotes the \textit{join} of $G$ and $H$, i.e., joining every vertex of $G$ to every vertex of $H$.
Let $kG$ denote the union of $k$ vertex-disjoint copies of $G$.
As usual, denote by $P_{n}$, $K_{n}$ and $K_{a, n-a}$ the \textit{path}, the \textit{complete graph} and the \textit{complete bipartite graph} of order $n$, respectively. For two subsets $R, S \subseteq V(G)$, we use $e(R, S)$ to denote the number of all edges of $G$ with one end vertex in $R$ and the other in $S$, and particularly, $e(S,S)$ is simplified by $e(S)$. 
For a vertex $v \in V(G)$, we write $N_{G}(v)$ for the set of neighbors of $v$ in $G$. Denote by $d_{G}(v):=|N_{G}(v)|$ the \textit{degree} of a vertex $v$ in $G$. Let $\delta(G)$ denote the \textit{minimum degree} of $G$. For a subset $S$ of $V(G)$, let $N_S(v):=N_{G}(v)\cap S$ and $d_S(v):=|N_S(v)|$. Moreover, we use $G[S]$ to denote the subgraph of $G$ induced by $S$. For a positive integer $k$, we use $G-ke$ to denote the graph obtained from $G$ by deleting any $k$ edges. For graph notations and concepts undefined here, readers are referred to \cite {BAJ}.

The investigation about the maximum (or minimum) spectral radius of graphs is an important and classic topic in graph spectra theory.
Indeed, the problem of characterizing graphs of given size with maximum spectral radius was initially proposed by Brualdi and Hoffman \cite {BRU} as a conjecture, and completely solved by Rowlinson \cite {Row}. 
Given a graph $H$, a graph is said to be \textit{$H$-free} if it does not contain $H$ as a subgraph. In spectral extremal graph theory, it is interesting to study the extremal problem asks what is the maximum spectral radius of an $H$-free graph of given size $m$, and characterize the corresponding extremal graphs. Up to now, the above problem has been investigated for various kinds of $H$, see for example, the complete graph \cite{NVB,NVB1,NVBF3}, the complete bipartite graph \cite{MZS},  the friendship graph \cite{Li1,Yu}, the book graph \cite{NVB2} and the cycles with specified length \cite{Lu,Min,NVBF,MZS}. 

The \textit{fan graph} is defined as $H_{t+1}:\cong K_1 \vee P_t$. It is worth noting that there have been plentiful results about the extremal problem on spectral radius for $H_{t+1}$-free graphs with size $m$.  In 1970, Nosal \cite{NEE} showed that $\rho(G) \leq \sqrt{m}$ for every $H_{3}$-free graph $G$ with size $m$. Subsequently,  Nikiforov \cite{NVB, NVB1} extended Nosal's result, and proved that if $G$ is an extremal graph with maximum spectral radius among all $H_{3}$-free graphs of size $m$, then $G$ is isomorphic to a complete bipartite graph (possibly, with some isolated vertices). In 2024, Lu, Lu and Li \cite{Lu} concluded that if $G$ is an extremal graph with maximum spectral radius over all $H_{4}$-free graphs of size $m$, then $G$ is also isomorphic to a complete bipartite graph (possibly, with some isolated vertices). Very recently, Zhang and Wang \cite{Zhang} (resp. Yu, Li and Peng \cite{Yu}) showed that if $G$ is an extremal graph with maximum spectral radius among all $H_{5}$-free graphs of size $m\geq 11$ (resp. $m\geq 8$), then $G \cong K_2 \vee \frac{m-1}{2} K_1$ (possibly, with some isolated vertices). Additionally, Yu, Li and Peng \cite{Yu} proposed the following conjecture on spectral radius for $H_{t+1}$-free graphs with given size $m$.
\begin{conj}\emph{(\cite{Yu})}\label{conj::1}
Let $k\geq2$ be a fixed positive integer. If  $G$ is an $H_{2k+1}$-free or $H_{2k+2}$-free graph of sufficiently large size $m$ without isolated vertices. Then 
$$
\rho(G)\leq \frac{k-1+\sqrt{4m-k^{2}+1}}{2},
$$
equality holds if and only if $G\cong K_{k} \vee\left(\frac{m}{k}-\frac{k-1}{2}\right) K_{1}$.
\end{conj}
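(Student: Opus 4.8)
The plan is to argue by extremality. Let $G$ attain the maximum spectral radius among all $H_{2k+1}$-free (resp.\ $H_{2k+2}$-free) graphs of size $m$ with no isolated vertices, write $\rho:=\rho(G)$, and let $\mathbf{x}$ be the positive Perron eigenvector normalized so that $\max_v x_v=x_u=1$. Since $K_k\vee\left(\frac{m}{k}-\frac{k-1}{2}\right)K_1$ is itself $H_{2k+1}$-free (hence $H_{2k+2}$-free) of size $m$ with no isolated vertices, extremality gives $\rho\ge\frac{k-1+\sqrt{4m-k^2+1}}{2}$; in particular $\rho$ is of order $\sqrt{m}$, which is essential for the "sufficiently large $m$" regime. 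Because the positive root of $t^2-(k-1)t-(m-\binom{k}{2})$ equals exactly the conjectured value, the theorem is equivalent to the single quadratic inequality $\rho^2-(k-1)\rho\le m-\binom{k}{2}$ together with the claim that equality forces $G\cong K_k\vee\left(\frac{m}{k}-\frac{k-1}{2}\right)K_1$. I would prove the inequality by localizing around $u$.

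The combinatorial engine is that $G$ contains $H_{t+1}=K_1\vee P_t$ if and only if some neighborhood $G[N(w)]$ contains a path on $t$ vertices. Thus $H_{2k+1}$-freeness means every $G[N(w)]$ is $P_{2k}$-free, so by the Erd\H{o}s--Gallai theorem $e(N(w))\le(k-1)d(w)$, while $H_{2k+2}$-freeness means every $G[N(w)]$ is $P_{2k+1}$-free, giving $e(N(w))\le\frac{2k-1}{2}d(w)$. The spectral side comes from the length-two walk identity $\rho^2=\rho^2 x_u=d(u)+\sum_{v\in N(u)}d_{N(u)}(v)\,x_v+\sum_{w\notin N[u]}d_{N(u)}(w)\,x_w$. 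Writing $\bar N:=V(G)\setminus N[u]$ and using $m=d(u)+e(N(u))+e(N(u),\bar N)+e(\bar N)$ together with $x_w\le 1$ on $\bar N$, the target inequality reduces, after cancelling $d(u)$, bounding the second-neighborhood sum by $e(N(u),\bar N)$, and discarding $e(\bar N)\ge 0$, to the purely local statement
\[ \sum_{v\in N(u)}\bigl(d_{N(u)}(v)-(k-1)\bigr)x_v\;\le\;e(N(u))-\binom{k}{2}. \qquad (\star) \]
One checks directly that $(\star)$ holds with equality for $K_k\vee\left(\frac{m}{k}-\frac{k-1}{2}\right)K_1$ when $u$ is a clique vertex, so $(\star)$ is exactly sharp and constitutes the real content of the theorem.

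To establish $(\star)$ I would combine the Erd\H{o}s--Gallai edge bound $e(N(u))=\frac12\sum_{v\in N(u)}d_{N(u)}(v)\le(k-1)d(u)$ with the weighting supplied by $\mathbf{x}$. The left-hand side of $(\star)$ is large only when some vertices of $N(u)$ simultaneously have large degree inside $N(u)$ and large eigenvector entry; but $G[N(u)]$ is $P_{2k}$-free (resp.\ $P_{2k+1}$-free), so only boundedly many vertices can carry large internal degree, and I expect to show that all but at most $k-1$ vertices of $N(u)$ satisfy $x_v=O(1/\sqrt{m})$. Their total contribution to $(\star)$ is then absorbed by $e(N(u))$, reducing $(\star)$ to an estimate on the set $D\subseteq N(u)$ of at most $k-1$ heavy vertices. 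On $D$ one invokes the structure of extremal $P_{2k}$-free (resp.\ $P_{2k+1}$-free) graphs — a bounded clique joined to an independent set, in the spirit of Kopylov and Balister--Gy\H{o}ri--Lehel--Schelp — so that $G[N(u)]$ is forced toward $K_{k-1}\vee tK_1$, which is precisely where the constant $\binom{k}{2}$ is saved.

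The main obstacle is exactly the sharp passage through $(\star)$: the Erd\H{o}s--Gallai bound overshoots the truth by precisely $\binom{k}{2}$, so the naive substitution $x_v\le 1$ is too lossy, and one must genuinely exploit both that the heavy vertices are few and that the path-free neighborhoods have the refined clique-plus-independent-set structure rather than an arbitrary extremal profile. Controlling the second neighborhood — showing $e(\bar N)=0$ and $\bar N=\emptyset$ at the optimum, where the largeness of $m$ and the no-isolated-vertex hypothesis enter — and carrying the two parity cases through the same scheme with their differing Erd\H{o}s--Gallai constants, is the part I expect to demand the most care and to be where a fully uniform argument for all $k\ge2$ may resist. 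The equality analysis then reverses every step: sharpness of Erd\H{o}s--Gallai and of $(\star)$ forces $G[N(u)]\cong K_{k-1}\vee tK_1$, emptiness of $\bar N$ forces $N[u]=V(G)$, and together these yield $G\cong K_k\vee\left(\frac{m}{k}-\frac{k-1}{2}\right)K_1$, with the no-isolated-vertex hypothesis excluding spurious isolated vertices.
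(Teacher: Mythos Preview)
The statement you are attempting is labelled and treated in the paper as a \emph{conjecture}, quoted from \cite{Yu}; the paper does not prove it. What the paper actually establishes is Theorem~\ref{thm::1.1}, the single special case $k=3$ for $H_7$-free graphs with $m\ge 33$. There is therefore no ``paper's own proof'' of Conjecture~\ref{conj::1} to compare against, and the $H_{2k+2}$-free half as well as all $k\ge 4$ remain open.

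That said, your reduction to the local inequality $(\star)$ via the length-two walk identity at a Perron-maximizing vertex is precisely the skeleton of the paper's argument in the case $k=3$. Your $(\star)$ with $k=3$ reads $\sum_{v\in N(u^*)}(d_{N(u^*)}(v)-2)x_v\le e(N(u^*))-3$, and the paper's inequality~\eqref{equ::3.7} is the same statement decomposed over connected components, with $\eta(H)=\sum_{v\in V(H)}(d_H(v)-2)\frac{x_v}{x_{u^*}}-e(H)$ playing the role of the per-component deficit.

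The genuine gap is in your proposed proof of $(\star)$ for general $k$. The paper's engine for $k=3$ is Lemma~\ref{lem::2.3} (imported from \cite{Lu}): an exhaustive classification of every possible $P_6$-free component $H$, with a sharp bound on $\eta(H)$ in each case, followed by ad hoc elimination of the exceptional components $K_5$ and $K_5-e$ (Claims~\ref{clm::2.2} and~\ref{clm::2.3}) using that their vertices have bounded degree in $G$ and hence small $x_v$. No analogue of Lemma~\ref{lem::2.3} is known for $P_{2k}$-free components when $k\ge 4$, and your proposed substitute does not supply one. Erd\H{o}s--Gallai gives only $e(N(u))\le (k-1)d(u)$, overshooting by exactly the $\binom{k}{2}$ you need; the Kopylov and Balister--Gy\H{o}ri--Lehel--Schelp results describe the unweighted extremal graphs but say nothing about the eigenvector-weighted quantity $\sum_v (d_{N(u)}(v)-(k-1))x_v$. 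Your assertion that ``all but at most $k-1$ vertices of $N(u)$ satisfy $x_v=O(1/\sqrt m)$'' is not justified and conflates large internal degree with large eigenvector entry: for instance $G[N(u)]$ could be a disjoint union of many copies of $K_{2k-1}$, in which every vertex has internal degree $2k-2\ge k$. Even in the paper's $k=3$ proof, smallness of $x_v$ is invoked only for a handful of explicitly listed small components, not as a blanket principle. You correctly flag this passage as the place ``where a fully uniform argument for all $k\ge 2$ may resist''; that resistance is real, and as of this paper the conjecture remains open beyond the case it settles.
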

In this paper, we will consider the case $k=3$ for $H_{2k+1}$-free graphs. Specifically, we attain the maximum spectral radius among all $H_7$-free graphs of size $m\geq33$, and characterize the unique extremal graph having no isolated vertices. Our main result is as follows. 

\begin{thm}\label{thm::1.1}
Let $m\geq 33$ and $G$ be an $H_{7}$-free graph with size $m$ having no isolated vertices. Then 
$$
\rho(G)\leq 1+\sqrt{m-2},
$$
equality holds if and only if $G\cong K_3 \vee \frac{m-3}{3}K_1$.
\end{thm}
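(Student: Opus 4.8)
The plan is to prove the following statement, which yields Theorem~\ref{thm::1.1} in full (in particular the strict inequality when $3\nmid m$, since the claimed extremal graph then does not exist): \emph{if $G$ is an $H_7$-free graph of size $m\ge33$ with no isolated vertices and $\rho:=\rho(G)\ge1+\sqrt{m-2}$, then $G\cong K_3\vee\tfrac{m-3}{3}K_1$}. After the standard reduction to connected $G$ (a disconnected graph is handled by working with a component of maximum spectral radius), let $\mathbf x>0$ be the Perron eigenvector, scaled so that $x_{u^*}=\max_v x_v=1$ for some vertex $u^*$. Note first that $G$ must contain a triangle, since otherwise Nosal's inequality~\cite{NEE} gives $\rho\le\sqrt m<1+\sqrt{m-2}$, a contradiction.

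Write $N:=N_G(u^*)$, $\bar N:=V(G)\setminus(N\cup\{u^*\})$, and $e_N:=e(G[N])$. Expanding $(\rho^2-2\rho)x_{u^*}=\sum_{v\in N}\bigl(\sum_{w\sim v}x_w-2x_v\bigr)$ and collecting terms by vertex yields the identity
\[
\rho^2-2\rho=\sum_{v\in N}\bigl(1+(d_N(v)-2)x_v\bigr)+\sum_{w\in\bar N}d_N(w)\,x_w .
\]
Since $m=|N|+e_N+e(N,\bar N)+e(\bar N)$, the target $\rho^2-2\rho\le m-3$ is equivalent to
\[
\sum_{v\in N}(d_N(v)-2)\,x_v+\sum_{w\in\bar N}d_N(w)\,x_w\ \le\ e_N+e(N,\bar N)+e(\bar N)-3 .
\]
As $G$ is $H_7$-free, $G[N]$ contains no $P_6$, so the Erd\H{o}s--Gallai theorem gives $e_N\le2|N|$ and, more usefully, strongly restricts the components of $G[N]$. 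I would also record the companion estimate $\rho^2\le\sum_{v\in N}d_G(v)=m+e_N-e(\bar N)\le m+2|N|$, obtained by bounding $x_w\le1$ in $\rho^2=\sum_{v\in N}\sum_{w\sim v}x_w$; with $\rho\ge1+\sqrt{m-2}$ this already forces $e_N\ge2\sqrt{m-2}-1$, and likewise $|N|=d_G(u^*)\ge\rho$.

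The crux, which I expect to occupy the bulk of the proof, is to establish the displayed inequality. It is tight up to lower-order terms, so every estimate must be carried out with care: in particular one cannot replace $x_v$ by $1$ for the many low-weight vertices of $N$, whose small entries are precisely what keeps the contribution $(d_N(c)-2)x_c$ of a vertex $c\in N$ with $d_N(c)$ large under control. I would (i) use $x_v\le d_G(v)/\rho\le d_G(v)/(1+\sqrt{m-2})$ together with $\sum_v d_G(v)=2m$ to make precise that low-degree vertices carry negligible weight and high-degree vertices are few; (ii) run a component-by-component analysis of the $P_6$-free graph $G[N]$, together with the way $\bar N$ attaches to $N$, bounding the two sums against $e_N$, $e(N,\bar N)$, $e(\bar N)$; and (iii) bootstrap, feeding the sharpened bounds back into the identity and into $\rho^2\le m+2|N|$ until $G$ is forced down to a triangle joined to an independent set. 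The genuine obstacle is that $P_6$-free graphs lack the rigid description of $P_4$-free graphs (disjoint unions of stars and triangles) that powers the $H_5$-free analyses of Zhang--Wang~\cite{Zhang} and Yu--Li--Peng~\cite{Yu}; the case analysis is correspondingly heavier, and the hypothesis $m\ge33$ is precisely what is needed to make the estimates close --- for example to guarantee $\rho>6$, so that every vertex of degree at most $3$ has eigenvector entry below $\tfrac12$ --- and to discard small sporadic graphs.

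Finally, once $G$ has been reduced to a triangle joined to an independent set together with at most a bounded number of further edges, I would eliminate those edges by re-examining the identity in the equality case: since none of the estimates above can then be strict, $x_v=1$ on exactly three mutually adjacent vertices, $\bar N=\emptyset$, and $G[N]\cong K_2\vee(|N|-2)K_1$, so that $G=K_1\vee G[N]\cong K_3\vee(|N|-2)K_1$. Counting edges gives $|N|-2=\tfrac{m-3}{3}$, hence $3\mid m$ and $G\cong K_3\vee\tfrac{m-3}{3}K_1$, completing the proof, with the strict inequality for $3\nmid m$ included.
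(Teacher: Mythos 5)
Your setup coincides with the paper's: reduce to connected $G$, normalize the Perron vector at a maximal vertex $u^*$, split $m$ as $|N|+e_N+e(N,\bar N)+e(\bar N)$, expand $\rho^2-2\rho$ through walks of length two, and reduce everything to the displayed inequality, whose equality case then yields $G[N]\cong K_2\vee(|N|-2)K_1$ and hence $G\cong K_3\vee\frac{m-3}{3}K_1$. Your observation that $\rho^2\le m+e_N-e(\bar N)$ forces $e_N\ge 2\sqrt{m-2}-1\ge 11$ is exactly the paper's Claim on $e(A_+)\ge 11$. The equality analysis at the end also matches the paper.

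The gap is that the part you yourself identify as the crux --- the component-by-component analysis of the $P_6$-free graph $G[N]$ that establishes $\sum_{v\in N}(d_N(v)-2)x_v - e_N \le -3$ up to the exceptional configurations --- is left as a three-point program rather than carried out. This is where essentially all of the work lies. The paper imports a ready-made classification (its Lemma~\ref{lem::2.3}, extracted from Lu--Lu--Li) giving, for each non-trivial component $H$ of $G[N]$, the bound $\eta(H):=\sum_{u\in V(H)}(d_H(u)-2)x_u-e(H)\le 0$, with $\eta(H)\le -3$ except for a short explicit list ($K_5$, $K_5-e$, $K_4$, $K_5-2e$, $K_2$, stars, $K_1\vee(K_3\cup tK_1)$, $K_2\vee tK_1$). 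Your plan does not produce this classification, and without it the inequality does not close.

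More concretely, your bootstrapping step (iii) cannot succeed by general estimates alone, because there are genuine obstructions that survive both $P_6$-freeness of $G[N]$ and $H_7$-freeness of $G$: a $K_5$ component of $G[N]$ gives $G[N\cup\{u^*\}]\supseteq K_6$, which contains no $H_7$ (only six vertices), yet contributes $\eta(K_5)$ as large as $0$, and $K_5-e$ contributes up to $-1$; either would leave room for $e(\bar N)\le 3$ and would not force the target structure. The paper must eliminate these by separate arguments (its Claims~\ref{clm::2.2} and \ref{clm::2.3}): a case split on whether $\bar N$ attaches to the component, an edge-rotation argument (Lemma~\ref{lem::2.2}) exploiting $H_7$-freeness to show all such attachments go to a single vertex, and then explicit eigen-equation computations showing the entries $x_{u_i}$ are too small (e.g.\ $x_{u_1}<\frac{2}{5}$ or $<\frac{5}{8}$) for $\eta$ to be near its maximum. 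Similar ad hoc arguments dispose of the cases $G[A_+]\cong K_{1,s}$ and $K_1\vee(K_3\cup tK_1)$, where $\eta$ is only $\le -2$. None of this is routine, and none of it follows from the degree/weight heuristics in your items (i)--(ii). As it stands the proposal is a correct reduction plus a plausible but unexecuted plan for the hard middle of the proof.
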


Let $C_6^{\triangle}$ denote the graph consisting of a $6$-vertex cycle plus a vertex adjacent to two adjacent vertices of the cycle. In 2024, Lu, Lu and Li \cite{Lu} considered the extremal problem on spectral radius for $C_6^{\triangle}$-free graphs with size $m \geq38$.
\begin{thm}\emph{(\cite{Lu})}\label{thm::1.2}
Let $m\geq 38$ and $G$ be a $C_6^{\triangle}$-free graph with size $m$ having no isolated vertices. Then 
$$
\rho(G)\leq 1+\sqrt{m-2},
$$
equality holds if and only if $G\cong K_3 \vee \frac{m-3}{3}K_1$.
\end{thm}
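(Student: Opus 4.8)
The plan is to reduce Theorem \ref{thm::1.2} directly to Theorem \ref{thm::1.1} by proving that the forbidden graph $C_6^{\triangle}$ is itself a subgraph of the fan graph $H_7 = K_1 \vee P_6$. Once this single containment is in hand, every $C_6^{\triangle}$-free graph is automatically $H_7$-free, and both the bound $\rho(G)\le 1+\sqrt{m-2}$ and its equality case transfer at once from Theorem \ref{thm::1.1}. This is the connection between the two forbidden subgraphs on which the whole argument turns.

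First I would exhibit an explicit copy of $C_6^{\triangle}$ inside $H_7$. Write $H_7 = K_1 \vee P_6$ with apex $w$ and path $v_1 v_2 v_3 v_4 v_5 v_6$, so that $w$ is adjacent to every $v_i$ while $v_i v_{i+1}\in E(H_7)$ for $1\le i\le 5$. The six vertices $w,v_1,v_2,v_3,v_4,v_5$ carry the $6$-cycle
$$v_1 v_2 v_3 v_4 v_5 w v_1,$$
since $v_1v_2,\,v_2v_3,\,v_3v_4,\,v_4v_5$ are path edges and $v_5w,\,wv_1$ are apex edges. The remaining vertex $v_6$ is joined to the two \emph{consecutive} cycle vertices $v_5$ and $w$ (the cycle edge $v_5w$), because $v_5v_6$ is a path edge and $wv_6$ is an apex edge. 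Hence $\{w,v_1,\dots,v_5\}$ spans a $C_6$ and $v_6$ is adjacent to two adjacent vertices of it, which is precisely $C_6^{\triangle}$. Therefore $C_6^{\triangle}\subseteq H_7$.

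Consequently any graph containing $H_7$ as a subgraph also contains $C_6^{\triangle}$; taking contrapositives, every $C_6^{\triangle}$-free graph is $H_7$-free. Now let $G$ be $C_6^{\triangle}$-free of size $m\ge 38$ with no isolated vertices. Since $m\ge 38\ge 33$ and $G$ is $H_7$-free, Theorem \ref{thm::1.1} gives $\rho(G)\le 1+\sqrt{m-2}$, with equality only if $G\cong K_3 \vee \frac{m-3}{3}K_1$. To confirm that the equality case genuinely lies in the $C_6^{\triangle}$-free class, I would check that $K_3 \vee \frac{m-3}{3}K_1$ is $C_6^{\triangle}$-free: its independent vertices form a stable set each adjacent only to the three triangle vertices, so any $6$-cycle must alternate between triangle and independent vertices and thereby uses all three triangle vertices. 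A seventh vertex is then forced to be an independent vertex, which is adjacent to no independent vertex on the cycle; since every cycle edge has exactly one independent endpoint, this seventh vertex cannot be joined to both endpoints of any cycle edge, so no $C_6^{\triangle}$ occurs. By the equality clause of Theorem \ref{thm::1.1} this graph attains $\rho=1+\sqrt{m-2}$, which establishes both the inequality and the ``if and only if''.

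The essential—and essentially the only nontrivial—step is spotting and verifying the embedding $C_6^{\triangle}\subseteq H_7$; everything after it is immediate, and I note that the reduction even sharpens the threshold from $m\ge 38$ to $m\ge 33$. The genuine depth of the statement is carried entirely by Theorem \ref{thm::1.1}, which we are permitted to assume, so no spectral analysis of $C_6^{\triangle}$-free graphs is needed. Should a self-contained argument be preferred, one would instead normalize the Perron eigenvector at a vertex $u$ of maximum entry, expand $\rho^2 x_u$ as a sum over walks of length two leaving $u$, and use the $C_6^{\triangle}$-free hypothesis to control the edges inside $N(u)$ and the paths leaving $N(u)$, driving the structure toward $K_3 \vee \frac{m-3}{3}K_1$; this is exactly the type of analysis that underlies Theorem \ref{thm::1.1}, and invoking that theorem lets us bypass it entirely.
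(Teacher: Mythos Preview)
Your proposal is correct and matches the paper's own argument: the paper simply remarks that $C_6^{\triangle}$ (and $F_3$) is a spanning subgraph of $H_7$, so Theorem~\ref{thm::1.1} immediately implies Theorem~\ref{thm::1.2}. Your explicit embedding of $C_6^{\triangle}$ in $H_7$ and the extra verification that $K_3 \vee \frac{m-3}{3}K_1$ is itself $C_6^{\triangle}$-free flesh out that one-line remark, but the route is identical.
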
 

Let $F_3$ denote the graph obtained from three vertex-disjoint triangles by sharing a common vertex. Very recently, Yu, Li and Peng \cite{Yu} 
studied the extremal problem on spectral radius for $F_3$-free graphs with size $m \geq33$.
\begin{thm}\emph{(\cite{Yu})}\label{thm::1.3}
Let $m\geq 33$ and $G$ be an $F_{3}$-free graph with size $m$ having no isolated vertices. Then 
$$
\rho(G)\leq 1+\sqrt{m-2},
$$
equality holds if and only if $G\cong K_3 \vee \frac{m-3}{3}K_1$.
\end{thm}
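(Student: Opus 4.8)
The plan is to work with an extremal graph and to exploit the clean combinatorial reformulation of $F_3$-freeness. Since $F_3=K_1\vee 3K_2$, a copy of $F_3$ centred at a vertex $v$ is exactly a matching of size three inside the induced graph $G[N_G(v)]$; hence $G$ is $F_3$-free if and only if $\nu(G[N_G(v)])\le 2$ for every $v$, where $\nu(\cdot)$ denotes the matching number. I would then fix $G$ to be a graph of maximum spectral radius among all $F_3$-free graphs of size $m$ without isolated vertices, and reduce to the connected case (by passing to the component of largest spectral radius and using that $1+\sqrt{m-2}$ is increasing in $m$). Let $\mathbf{x}$ be the Perron eigenvector normalized so that $\max_v x_v = x_{u^*}=1$, and set $W:=N_G(u^*)$, $d:=d_G(u^*)=|W|$, and $\rho:=\rho(G)$. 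Because $K_3\vee\frac{m-3}{3}K_1$ is $F_3$-free and has spectral radius exactly $1+\sqrt{m-2}$ (its equitable partition has quotient matrix $\left(\begin{smallmatrix}2&t\\3&0\end{smallmatrix}\right)$ with $t=\frac{m-3}{3}$, whose largest root is $1+\sqrt{1+3t}=1+\sqrt{m-2}$), extremality gives $\rho\ge 1+\sqrt{m-2}$, and it suffices to prove the reverse inequality together with the equality characterization.

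Next I would extract the two eigen-identities at $u^*$, namely $\rho=\sum_{v\in W}x_v$ and, expanding $\rho^2 x_{u^*}$ as a sum over walks of length two from $u^*$,
$$\rho^2-2\rho = d + \sum_{v\in W}\big(d_W(v)-2\big)x_v + \sum_{w\notin W\cup\{u^*\}} d_W(w)\,x_w.$$
Writing $\overline W:=V(G)\setminus(W\cup\{u^*\})$ and $m=d+e(W)+e(W,\overline W)+e(\overline W)$, and using $x_w\le 1$ to bound $\sum_{w\in\overline W}d_W(w)x_w\le e(W,\overline W)$ together with $e(\overline W)\ge 0$, the target $\rho^2-2\rho\le m-3$ reduces to the purely local inequality
$$\sum_{v\in W}\big(d_W(v)-2\big)x_v \le e(W)-3.$$
The gain from this reduction is that equality throughout forces $e(\overline W)=0$ and $x_w=1$ on every $w\in\overline W$ incident to $W$, which by maximality of $x_{u^*}$ will push all of $G$'s edges into $\{u^*\}\cup W$ — precisely the shape of the conjectured extremal graph.

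The heart of the matter is this local inequality, for which I would invoke $\nu(G[W])\le 2$. By the Erd\H{o}s--Gallai theorem, $e(W)\le\max\{10,2d-3\}=2d-3$ once $d$ is large; moreover the vertex set $T$ of a maximum matching of $G[W]$ meets every edge (else the matching could be enlarged) and satisfies $|T|=2\nu(G[W])\le 4$. For $v\in W\setminus T$ one has $N_W(v)\subseteq T$, so $d_W(v)\le 4$; I would bound $\sum_{v\in W\setminus T}d_W(v)x_v$ by reversing the order of summation and applying $\rho x_t=\sum_{w\sim t}x_w$ at each $t\in T$ (discarding the guaranteed contributions $x_{u^*}=1$ and the weights of the other $T$-neighbours of $t$), while using $x_v\le 1$ for $v\in T$ together with $\sum_{v\in T}d_W(v)=e(W)+e(G[T])$. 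This yields the local inequality in the principal, book-like regime where $G[W]$ is close to $K_2\vee\overline K_{d-2}$. I expect the main obstacle to lie in the degenerate regimes that this crude weighting does not settle directly: when $G[W]$ contains a $K_5$ (which forces $G[W]=K_5\cup\overline K_{d-5}$) or when $d_G(u^*)$ is small, the reduction above is too lossy, and one must instead retain the terms $e(W,\overline W)$ and $e(\overline W)$ and argue directly that $\rho<1+\sqrt{m-2}$. It is exactly here that the hypothesis $m\ge 33$ enters, to exclude the small dense exceptions (for instance $K_6$, which is $F_3$-free and beats the bound only for small $m$).

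Finally, for the equality characterization I would retrace the chain of inequalities. Equality in $\rho^2-2\rho\le m-3$ forces $e(\overline W)=0$, $x_w=1$ for every $w\in\overline W$ with $d_W(w)>0$, and hence $\overline W=\emptyset$; equality in the Erd\H{o}s--Gallai bound, so that $G[W]\cong K_2\vee\overline K_{d-2}$ with covering pair $T=\{b,c\}$ and $bc\in E(G)$; and $x_b=x_c=1$ with $d_W(v)=2$ for every $v\in W\setminus\{b,c\}$. Consequently each such $v$ is adjacent exactly to $b$, $c$, and $u^*$, i.e. to all of the triangle $\{u^*,b,c\}$, and no further edges exist. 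Counting edges, $m=3+3(d-2)$, so $G\cong K_3\vee\frac{m-3}{3}K_1$, which completes the proof.
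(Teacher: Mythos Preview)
The paper does not prove Theorem~\ref{thm::1.3} from scratch; it is quoted from \cite{Yu}, and the paper's own contribution is to observe that it is an immediate corollary of the main Theorem~\ref{thm::1.1}: since $F_3$ is a spanning subgraph of $H_7$, every $F_3$-free graph is in particular $H_7$-free, and the bound together with its equality case transfers verbatim. All the analysis in the paper therefore runs through the implication ``$G$ is $H_7$-free $\Rightarrow G[N_G(u^*)]$ is $P_6$-free'' and the component-by-component $\eta(H)$ bounds of Lemma~\ref{lem::2.3}, not through any matching-number argument.

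Your route is genuinely different: you attack the statement directly via the characterization $\nu(G[W])\le 2$ and an Erd\H{o}s--Gallai/vertex-cover estimate. It is worth noting that your hypothesis is actually \emph{stronger} than the paper's structural input, since $\nu(G[W])\le 2$ forces $G[W]$ to be $P_6$-free ($P_6$ contains a $3$-matching); so Lemma~\ref{lem::2.3} is available to you and would resolve precisely the cases you leave open.

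As written, though, there are two real gaps. First, the connectedness reduction is not sound: you fix an \emph{extremal} $G$ and then ``pass to the component of largest spectral radius'', but that component has strictly fewer than $m$ edges, so monotonicity of $1+\sqrt{m-2}$ yields nothing and the hypothesis $m\ge 33$ may fail for it. The paper instead shows the extremal graph must itself be connected via a cut-edge perturbation that preserves size and the forbidden subgraph; the same device works for $F_3$. Second --- and you acknowledge this --- the core local inequality $\sum_{v\in W}(d_W(v)-2)x_v\le e(W)-3$ is only sketched. Your reversal argument does not cover the dense exceptions ($K_5$, $K_5-e$, $K_4$, small $|W|$), and in the paper these are handled not by Erd\H{o}s--Gallai but by the $\eta(H)$ table of Lemma~\ref{lem::2.3} combined with ad hoc eigenvector estimates (Claims~\ref{clm::2.2}--\ref{clm::2.3}) that exploit $\rho\ge 1+\sqrt{31}$. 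Without an analogous device your argument cannot be closed, and the final equality step (``equality in Erd\H{o}s--Gallai, so $G[W]\cong K_2\vee\overline K_{d-2}$'') presupposes exactly the case analysis you have not carried out.
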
 
Note that $C_6^{\triangle}$ and $ F_{3}$ are two spanning subgraphs of $H_7$. Hence, Theorem \ref{thm::1.1} can imply Theorems \ref{thm::1.2} and \ref{thm::1.3}.

\section{Proof of Theorem \ref{thm::1.1}}
We first present some important results that are useful for our arguments. 

\begin{lem}\emph{(\cite{BBR})}\label{lem::2.1}
Let $G$ be a connected graph. If $H$ is a proper subgraph of $G$, then 
$\rho(H)<\rho(G).$
\end{lem}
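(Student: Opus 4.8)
The plan is to deduce this from the Perron--Frobenius theory of nonnegative irreducible matrices. First I would reduce to the case where $H$ and $G$ share the same vertex set: if $V(H)\subsetneq V(G)$, append the missing vertices to $H$ as isolated vertices, which changes neither $\rho(H)$ nor the property of being a proper subgraph. After this reduction $A(H)$ and $A(G)$ are symmetric $n\times n$ matrices with $0\le A(H)\le A(G)$ entrywise, and since $H$ is a \emph{proper} subgraph there is at least one edge $uv\in E(G)\setminus E(H)$; in particular $A(H)\ne A(G)$, and the nonnegative matrix $A(G)-A(H)$ has a positive entry in position $(u,v)$. Because $G$ is connected, $A(G)$ is irreducible, so the Perron--Frobenius theorem supplies a strictly positive eigenvector $x>0$ with $A(G)x=\rho(G)x$; moreover $\rho(G)$ is a simple eigenvalue whose only nonnegative eigenvectors are the positive scalar multiples of $x$.

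Next I would establish the non-strict inequality by a two-sided eigenvector comparison. Let $y\ge 0$, $y\ne 0$, be a Perron eigenvector of the nonnegative matrix $A(H)$, so that $A(H)y=\rho(H)y$. Using that $A(G)$ is symmetric with Perron vector $x$, one computes
\[
\rho(H)\,(x^{\top}y)=x^{\top}A(H)y\le x^{\top}A(G)y=(A(G)x)^{\top}y=\rho(G)\,(x^{\top}y),
\]
where the middle inequality holds because $x\ge 0$, $y\ge 0$ and $A(H)\le A(G)$ entrywise. Since $x>0$ and $y\ge 0$ is nonzero we have $x^{\top}y>0$, and dividing gives $\rho(H)\le\rho(G)$.

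It remains to upgrade this to a strict inequality, which I expect to be the only real obstacle, precisely because $H$ need not be connected and so its Perron vector $y$ is not automatically positive. Suppose for contradiction that $\rho(H)=\rho(G)=:\rho$. Then the displayed chain is an equality; moreover from $A(G)\ge A(H)$ and $y\ge 0$ we get $A(G)y\ge A(H)y=\rho y$ entrywise, so the vector $w:=A(G)y-\rho y$ is nonnegative, and pairing with $x>0$ gives $x^{\top}w=x^{\top}A(G)y-\rho\,x^{\top}y=\rho\,x^{\top}y-\rho\,x^{\top}y=0$, which forces $w=0$, i.e. $A(G)y=\rho y$. Thus $y$ is a nonnegative eigenvector of $A(G)$ for $\rho(G)$, so by the Perron--Frobenius conclusion above $y$ is a positive multiple of $x$; in particular $y>0$. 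But then $(A(G)-A(H))y=\rho y-\rho y=0$, while the $u$-th coordinate of $(A(G)-A(H))y$ is at least $y_v>0$ on account of the deleted edge $uv$ --- a contradiction. Hence $\rho(H)<\rho(G)$. The heart of the argument is this final step: the passage from $\le$ to $<$ is closed entirely by irreducibility of $A(G)$, which rules out a nonnegative $\rho(G)$-eigenvector vanishing on the endpoints of a deleted edge.
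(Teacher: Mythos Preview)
The paper does not supply a proof of this lemma; it simply quotes it from Bapat's textbook \cite{BBR}. Your argument is correct and is precisely the standard Perron--Frobenius proof one finds in such references: reduce to a common vertex set, use $0\le A(H)\le A(G)$ with $A(G)$ irreducible to get $\rho(H)\le\rho(G)$, and then exclude equality by showing that a nonnegative $\rho(G)$-eigenvector of $A(G)$ must be strictly positive, which is incompatible with $(A(G)-A(H))y=0$ at a deleted edge. There is nothing to compare against in the paper itself.
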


\begin{lem}\emph{(\cite{Wu})}\label{lem::2.2}
Let $u, v$ be two distinct vertices in a connected graph $G$. Suppose that $v_1, v_2, \dots, v_s$ $(1\leq s \leq d_G(v))$ are some vertices of $ N_G(v) \backslash N_G(u)$, and 
$\mathbf{x}$ is the Perron vector of $A(G)$ with coordinate $x_z$ corresponding to $z\in V(G)$. Let $G^{\prime}= G-\{v v_i \mid 1 \leq i \leq s\}+\{u v_i \mid 1 \leq i \leq s\}$. If $x_u \geq x_v$, then $\rho(G)<\rho(G^{\prime}).$
\end{lem}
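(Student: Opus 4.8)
The plan is to prove this edge-switching inequality by the standard Rayleigh-quotient comparison, exploiting that $G$ is connected so that its Perron vector $\mathbf{x}$ is entrywise positive. Normalize $\mathbf{x}$ so that $\mathbf{x}^{T}\mathbf{x}=1$; then $\rho(G)=\mathbf{x}^{T}A(G)\mathbf{x}=2\sum_{ab\in E(G)}x_{a}x_{b}$, and for the modified graph $G'$ the variational principle gives $\rho(G')\geq \mathbf{x}^{T}A(G')\mathbf{x}$. First I would record that $G'$ is a well-defined simple graph on the same vertex set: each $v_{i}\in N_{G}(v)\setminus N_{G}(u)$ means $uv_{i}\notin E(G)$, so the added edges are genuinely new, and $v_{i}\neq u$, so no loop is created, while the deleted edges $vv_{i}$ all exist; in particular $G$ and $G'$ have the same size.

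The core computation is to compare the two quadratic forms edge by edge. Passing from $G$ to $G'$ removes the $s$ edges $vv_{i}$ and inserts the $s$ edges $uv_{i}$, so
$$\mathbf{x}^{T}A(G')\mathbf{x}-\mathbf{x}^{T}A(G)\mathbf{x}=2\sum_{i=1}^{s}\bigl(x_{u}-x_{v}\bigr)x_{v_{i}}=2\,(x_{u}-x_{v})\sum_{i=1}^{s}x_{v_{i}}.$$
Since $\mathbf{x}>0$ and $s\geq 1$, the sum $\sum_{i=1}^{s}x_{v_{i}}$ is strictly positive, and the hypothesis $x_{u}\geq x_{v}$ makes the whole expression nonnegative. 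Combined with $\rho(G')\geq \mathbf{x}^{T}A(G')\mathbf{x}$ this already yields $\rho(G')\geq \rho(G)$; the remaining task is to upgrade this to a strict inequality.

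I would obtain strictness by splitting on the boundary case. If $x_{u}>x_{v}$, the displayed difference is strictly positive, so $\rho(G')\geq \mathbf{x}^{T}A(G')\mathbf{x}>\rho(G)$ and we are done immediately. The delicate case is $x_{u}=x_{v}$, where the difference vanishes and the Rayleigh bound is tight, giving only $\rho(G')\geq \rho(G)$. Here I argue by contradiction: assume $\rho(G')=\rho(G)$. Then $\mathbf{x}$ attains the maximum of the Rayleigh quotient of $A(G')$, hence is an eigenvector of $A(G')$ for its largest eigenvalue, i.e.\ $A(G')\mathbf{x}=\rho(G)\mathbf{x}$. Reading this eigen-equation at the vertex $u$, whose neighborhood gained exactly $\{v_{1},\dots,v_{s}\}$, gives
$$\rho(G)x_{u}=(A(G)\mathbf{x})_{u}+\sum_{i=1}^{s}x_{v_{i}}=\rho(G)x_{u}+\sum_{i=1}^{s}x_{v_{i}},$$
forcing $\sum_{i=1}^{s}x_{v_{i}}=0$, which contradicts $\mathbf{x}>0$. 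Therefore $\rho(G')>\rho(G)$ in all cases.

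The main obstacle is precisely this strictness in the degenerate regime $x_{u}=x_{v}$: the Rayleigh comparison alone is insufficient there, and one must combine the Perron--Frobenius positivity of $\mathbf{x}$ with the local eigen-equation at the vertex $u$ that acquires the new edges. A secondary point worth stating carefully is that the equality $\rho(G')=\rho(G)$ indeed forces $\mathbf{x}$ to be an eigenvector of $A(G')$, which follows because any vector attaining the maximum of the Rayleigh quotient of a real symmetric matrix lies in its top eigenspace; note that this step does not require $G'$ to be connected.
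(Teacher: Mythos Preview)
Your proof is correct and is exactly the standard Rayleigh-quotient argument for this edge-switching lemma; the paper itself does not supply a proof but merely cites the result from \cite{Wu}, where the same approach is used. One small remark: your assertion that $v_i\neq u$ is not literally forced by the hypothesis $v_i\in N_G(v)\setminus N_G(u)$ (since $u\notin N_G(u)$ in a simple graph, $u$ could in principle lie in this set when $uv\in E(G)$), but this exclusion is implicit in the lemma's formulation, as otherwise the operation would create a loop; it does no harm to state it explicitly as you did.
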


Let $\lambda(M)$ denote the largest eigenvalue of a square matrix $M$ with only real eigenvalues.
\begin{lem}(p. 30, \cite{Brou}; p. 196--198, \cite{Gods})\label{lem::2.5}
Let $M$ be a real symmetric matrix and $B_{\Pi}$ be an equitable quotient matrix of $M$. Then the eigenvalues of $B_{\Pi}$ are also eigenvalues of $M$. Furthermore, if $M$ is nonnegative and irreducible, then
$\lambda(M)=\lambda\left(B_{\Pi}\right).$
\end{lem}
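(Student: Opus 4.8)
The statement is the standard fact that an equitable quotient matrix inherits the eigenvalues of $M$ and shares the Perron root when $M$ is nonnegative and irreducible; I would prove it via the characteristic matrix of the partition and Perron--Frobenius theory. Let $\Pi=\{V_1,\dots,V_k\}$ be the equitable partition underlying $B_{\Pi}=(b_{ij})$, and let $S$ be the $n\times k$ \emph{characteristic matrix}, with $S_{ui}=1$ if $u\in V_i$ and $0$ otherwise. The equitable condition ``$\sum_{w\in V_j}M_{uw}=b_{ij}$ for every $u\in V_i$'' is precisely the identity $MS=SB_{\Pi}$; this single algebraic fact drives everything. Since the parts are nonempty and pairwise disjoint, the columns of $S$ are linearly independent, so $S$ is injective.

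For the first assertion, suppose $B_{\Pi}y=\mu y$ with $y\neq 0$. Then $M(Sy)=(MS)y=(SB_{\Pi})y=\mu(Sy)$, and injectivity gives $Sy\neq 0$, so $\mu$ is an eigenvalue of $M$ with eigenvector $Sy$. To record that every such $\mu$ is real (so that ``eigenvalue of $M$'' is unambiguous), set $D=\mathrm{diag}(\sqrt{|V_1|},\dots,\sqrt{|V_k|})$ and $\widehat S=SD^{-1}$; then $\widehat S^{\top}\widehat S=I_k$, and from $MS=SB_{\Pi}$ together with $S^{\top}S=D^{2}$ one computes $\widehat S^{\top}M\widehat S=DB_{\Pi}D^{-1}=:\widehat B$, which is symmetric because $M$ is. Thus $B_{\Pi}$ is similar to the symmetric matrix $\widehat B$, so its spectrum is real and coincides with that of $\widehat B$. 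Moreover $\widehat B=\widehat S^{\top}M\widehat S$ is a compression of $M$ by a matrix with orthonormal columns, so Cauchy interlacing yields $\lambda(B_{\Pi})=\lambda(\widehat B)\le\lambda(M)$.

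It remains to upgrade this inequality to equality when $M$ is nonnegative and irreducible, which I regard as the main point. Here I would use that $U:=\mathrm{col}(S)$, the subspace of vectors constant on each part, is $M$-invariant (again by $MS=SB_{\Pi}$), hence so is $U^{\perp}$ by symmetry of $M$; consequently the spectrum of $M$ is the union of the spectra of $\widehat B$ (the action of $M$ on $U$) and of $M|_{U^{\perp}}$. By Perron--Frobenius, $\lambda(M)$ is a simple eigenvalue whose eigenspace is spanned by a strictly positive vector $\mathbf{x}$. If $\lambda(M)$ were an eigenvalue of $M|_{U^{\perp}}$, its eigenvector would lie in $U^{\perp}$ and, by simplicity, be a scalar multiple of $\mathbf{x}$, forcing $\mathbf{x}\in U^{\perp}$; but $\mathbf{1}=S\mathbf{1}_k\in U$ gives $\langle \mathbf{x},\mathbf{1}\rangle=\sum_u x_u>0$, a contradiction. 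Hence $\lambda(M)$ must be an eigenvalue of $\widehat B$, so $\lambda(M)\le\lambda(\widehat B)=\lambda(B_{\Pi})$, which together with the interlacing bound gives $\lambda(M)=\lambda(B_{\Pi})$. The delicate point to handle carefully is exactly this last dichotomy: the argument hinges on the simplicity and strict positivity of the Perron root and on the fact that $U$ contains the all-ones vector, which is precisely what rules out $\lambda(M)$ hiding in $U^{\perp}$.
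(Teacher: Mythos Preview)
Your argument is correct and is essentially the standard textbook proof; the paper itself does not supply a proof of this lemma but merely cites \cite{Brou} and \cite{Gods}, whose treatments proceed exactly as you do via the intertwining relation $MS=SB_{\Pi}$ for the characteristic matrix $S$ and Perron--Frobenius to force the Perron eigenvector into the column space of $S$.
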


\begin{lem}\label{lem::2.4}
Let $m\geq6$. Then $\rho(K_3 \vee \frac{m-3}{3}K_1) = 1+\sqrt{m-2}.$
\end{lem}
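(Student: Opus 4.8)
The plan is to exhibit an equitable partition of $V\bigl(K_3 \vee \tfrac{m-3}{3}K_1\bigr)$ and to read off the spectral radius from the associated quotient matrix by invoking Lemma \ref{lem::2.5}. Write $n=\tfrac{m-3}{3}$ and $G:=K_3\vee nK_1$; the hypothesis $m\geq 6$ ensures $n\geq 1$, so $G$ is well defined and connected, and hence $A(G)$ is nonnegative and irreducible. Partition $V(G)$ into $V_1$, the three vertices of the triangle $K_3$, and $V_2$, the set of $n$ independent vertices that are joined to all of $K_3$. Every vertex of $V_1$ has exactly $2$ neighbours in $V_1$ and $n$ neighbours in $V_2$, while every vertex of $V_2$ has exactly $3$ neighbours in $V_1$ and none in $V_2$. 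Thus $\Pi=\{V_1,V_2\}$ is equitable, with equitable quotient matrix
$$
B_\Pi=\begin{pmatrix} 2 & n \\ 3 & 0 \end{pmatrix}.
$$

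Next I would apply Lemma \ref{lem::2.5}: since $A(G)$ is nonnegative and irreducible, $\rho(G)=\lambda\bigl(A(G)\bigr)=\lambda(B_\Pi)$. The characteristic polynomial of $B_\Pi$ is $\lambda^2-2\lambda-3n$, whose larger root is $1+\sqrt{1+3n}$. Substituting $n=\tfrac{m-3}{3}$ gives $1+3n=m-2$, and therefore $\rho(G)=1+\sqrt{m-2}$, as claimed. (Equivalently, one may avoid Lemma \ref{lem::2.5} and argue directly: a Perron vector of $G$ is constant on $V_1$ and on $V_2$, say with values $a$ and $b$; the eigen-equations $\rho a=2a+nb$ and $\rho b=3a$ eliminate to the same quadratic $\rho^2-2\rho-3n=0$.)

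There is essentially no serious obstacle in this lemma; it is a short computation. The only points deserving a moment's care are verifying that $\Pi$ is genuinely equitable — immediate from the join structure — and checking that the connectivity hypothesis of Lemma \ref{lem::2.5} holds, so that we may conclude the equality $\lambda(A(G))=\lambda(B_\Pi)$ rather than merely that $\lambda(B_\Pi)$ is some eigenvalue of $A(G)$, together with the trivial observation that the spectral radius is the \emph{larger} of the two roots of the quadratic.
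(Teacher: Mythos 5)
Your proof is correct and follows essentially the same route as the paper: both exhibit the equitable partition into the triangle and the independent set, form the quotient matrix $\bigl(\begin{smallmatrix}2 & (m-3)/3\\ 3 & 0\end{smallmatrix}\bigr)$, and apply Lemma \ref{lem::2.5} to identify $\rho(G)$ with the larger root of $x^2-2x-(m-3)=0$. No substantive differences.
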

\begin{proof}
Let $G:\cong K_3 \vee \frac{m-3}{3}K_1$. It is easy to see that $A(G)$ has the equitable quotient matrix
$$
B_{\Pi}=\left[\begin{array}{cc}
2 & \frac{m-3}{3}\\
3 & 0
\end{array}\right].
$$
Thus, the characteristic polynomial of $B_{\Pi}$ is $f(x)=x^2-2x-m+3.$
By Lemma \ref{lem::2.5}, $\rho(G)=\lambda\left(B_{\Pi}\right)$ is the largest root of $f(x)=0$. Hence, $\rho(G) = 1+\sqrt{m-2}.$
\end{proof}		

The following result can be obtained by the proofs of Lemmas $4$ and $5$ in \cite{Lu}. 
\begin{lem}\emph{(\cite{Lu})}\label{lem::2.3}
Let $G$ be a connected graph, $\mathbf{x}$ be the Perron vector of $A(G)$ with coordinate $x_u$ corresponding to $u\in V(G)$ and $u^* \in V(G)$ with $x_{u^*}=\max \{x_u: u \in V(G)\}$.  Suppose that $H$ is any non-trivial component of $G[N_G(u^*)]$ and $\eta(H):=\sum_{u \in V(H)}(d_H(u)-2) \frac{x_u}{x_{u^*}}-e(H)$. If $G[N_G(u^*)]$ is $P_6$-free, then the following statements hold.

\emph{(i)} For $\delta(H) \geq 2$, 
$$
\eta(H) \leq \begin{cases}0 & \text { if } H\cong K_5, \\ -1 & \text { if } H\cong K_5-e, \\ -2 & \text { if } H\cong K_4 \text { or } K_5-2 e, \\ -3 & \text { otherwise, }\end{cases}
$$
equality holds only if $x_u=x_{u^*}$ for all $u \in V(H) \backslash\{v \in V(H): d_H(v)=2\}$. Particularly, if $|V(H)|\geq6$, then $\eta(H)=-3$ occurs only if $H\cong K_2 \vee (|V(H)|-2)K_1$. 

\emph{(ii)} For $\delta(H) = 1$, 
$$
\eta(H) < \begin{cases} -1 & \text { if } H\cong K_2, \\ -2 & \text { if } H\cong K_{1,s} \;(s\geq 2) \text { or } K_1 \vee (K_3 \cup (|V(H)|-4)K_1) \;(|V(H)|\geq5), \\ -3 & \text { otherwise. }\end{cases}
$$
\end{lem}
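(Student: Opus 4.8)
The plan is to factor the argument into an elementary eigenvector estimate, which produces a clean combinatorial target, and then a structural classification of $P_6$-free graphs. Throughout I write $t_u := x_u/x_{u^*}$ for $u\in V(H)$; since $\mathbf{x}$ is the Perron vector and $x_{u^*}$ is its largest coordinate, $0<t_u\le 1$, and by definition $\eta(H)=\sum_{u\in V(H)}(d_H(u)-2)t_u-e(H)$. The key observation is that vertices of degree exactly $2$ contribute nothing, so $\eta(H)$ is governed only by vertices of degree $\ge 3$ (positive coefficient) and by leaves (negative coefficient), together with the identity $\sum_{d_H(u)\ge 3}(d_H(u)-2)=2e(H)-2|V(H)|+L(H)$, where $L(H)$ denotes the number of leaves of $H$.

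For part (i) the hypothesis $\delta(H)\ge 2$ removes the leaves, so every coefficient $d_H(u)-2$ is nonnegative; bounding each $t_u$ by $1$ gives
\[
\eta(H)\le \sum_{u\in V(H)}(d_H(u)-2)-e(H)=(2e(H)-2|V(H)|)-e(H)=e(H)-2|V(H)|,
\]
with equality only when $t_u=1$, i.e.\ $x_u=x_{u^*}$, for every $u$ with $d_H(u)\ge 3$ — precisely the stated equality condition. For part (ii) there is at least one leaf, and since $t_u>0$ on leaves the negative leaf-sum is strictly negative, whence
\[
\eta(H)<\sum_{d_H(u)\ge 3}(d_H(u)-2)-e(H)=e(H)-2|V(H)|+L(H).
\]
In both cases the problem is thereby reduced to a purely combinatorial estimate: in (i) I must show $e(H)-2|V(H)|\le -3$ outside the four listed graphs, and in (ii) that $e(H)-2|V(H)|+L(H)\le -3$ outside the listed families.

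I would carry out the classification by examining a longest path $P=v_1\cdots v_\ell$ of $H$, which has $\ell\le 5$ vertices because $H$ is $P_6$-free. As a baseline, the Erd\H{o}s--Gallai theorem already gives $e(H)\le 2|V(H)|$, matching the value $0$ attained by $K_5$. For the refinement I would first dispose of the small cases: every graph on at most $5$ vertices is automatically $P_6$-free, and a direct inspection shows that among such graphs with $\delta\ge 2$ only $K_5,\,K_5-e,\,K_5-2e,\,K_4$ satisfy $e-2|V|\ge -2$ (with values $0,-1,-2,-2$). For $|V(H)|\ge 6$ I would argue that the endpoints $v_1,v_\ell$ send all their edges into $P$, that $\delta(H)\ge 2$ then forces specific chords, and that every off-path vertex must attach to $P$ without completing a sixth path-vertex; tracking this edge budget should yield $e(H)\le 2|V(H)|-3$, with equality forcing all but two fixed vertices to have degree exactly $2$ and to be joined to both of those two, i.e.\ $H\cong K_2\vee(|V(H)|-2)K_1$. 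Part (ii) is the same longest-path analysis with leaves retained, checking that $K_2$ ($-1$), the stars $K_{1,s}$ and the graphs $K_1\vee(K_3\cup(|V|-4)K_1)$ (both $-2$) are the only connected $P_6$-free graphs with $e-2|V|+L\ge -2$.

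The main obstacle is this structural classification in the regime $|V(H)|\ge 6$. The difficulty is that $P_6$-freeness forbids a (not necessarily induced) subgraph, so I cannot delete or contract vertices freely; I expect to organize the work around the three subcases $\ell=5$, $\ell=4$, and $\ell\le 3$, and in each to bound how many vertices can be pendant-like versus chord-carrying, verifying that any attempt to reach $e=2|V(H)|-2$ either creates a path on six vertices or breaks the degree hypothesis. One helpful reduction is that deleting a degree-$2$ vertex preserves $P_6$-freeness and leaves $e-2|V|$ unchanged, which lets me strip $H$ to a denser core before the case split; preserving the exact extremal characterization through this stripping is the delicate point.
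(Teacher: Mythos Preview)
The paper does not prove this lemma at all: it is quoted, with the preceding remark that ``the following result can be obtained by the proofs of Lemmas~4 and~5 in~\cite{Lu}.'' So there is no in-paper proof to compare against; your proposal is effectively an attempt to reconstruct the argument of the cited source.

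Your reduction step is correct and is the natural first move. Since $0<t_u\le 1$, replacing each $t_u$ by $1$ on vertices of degree at least $3$ (nonnegative coefficient) and observing that the leaf contributions are strictly negative yields exactly
\[
\eta(H)\le e(H)-2|V(H)| \quad \text{(part (i))},\qquad \eta(H)< e(H)-2|V(H)|+L(H)\quad \text{(part (ii))},
\]
with the stated equality condition in (i). This cleanly converts both parts into purely combinatorial edge-count statements about connected $P_6$-free graphs, and the values you compute for the listed exceptional graphs are all correct.

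What remains is the structural classification, which you have only sketched. Your longest-path case split ($\ell\le 5$) is the right organizing principle, and the invariance of $e-2|V|$ under deleting a degree-$2$ vertex is a useful observation; but as you yourself flag, that deletion need not preserve $\delta\ge 2$, so the ``strip to a denser core'' idea cannot be applied naively, and recovering the exact extremal graph $K_2\vee(|V(H)|-2)K_1$ through such a process is where the difficulty concentrates. The plan is sound, but the actual case analysis (in particular $\ell=5$ with off-path vertices for (i), and the interplay of leaves with high-degree vertices for (ii)) still has to be carried out before this becomes a proof.
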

In the sequel, we will give the proof of Theorem \ref{thm::1.1}.
\renewcommand\proofname{\bf Proof of Theorem \ref{thm::1.1}.}
\begin{proof}
\renewcommand\proofname{\bf Proof.}	
Let $m\geq33$ and $G$ be an extremal graph with the maximum spectral radius among all $H_7$-free graphs with size $m$ having no isolated vertices. Set $\rho:=\rho(G)$ for short. 
Indeed, $G$ is connected. Otherwise, let $G_1$ be a component of $G$ with $\rho(G_1)=\rho$ and $G_2$ be any other component. Assume that $u \in V(G_1)$  and $vw \in E(G_2)$. Let $G^{\prime}$ be the graph obtained from $G$ by adding an edge $uv$ and deleting the edge $vw$. Note that $uv$ is a cut edge in $G^{\prime}$ and $H_7$ is $2$-connected. Then $G^{\prime}$ is still an $H_7$-free graph with size $m$. However, by Lemma \ref{lem::2.1} we obtain $\rho(G^{\prime}) > \rho$, a contradiction. 
Suppose that $\mathbf{x}$ is the Perron vector of $A(G)$ with  coordinate $x_u$ corresponding to $u\in V(G)$, and $u^*$ is a vertex of $V(G)$ with $$x_{u^*}=\max\{x_{u} \mid u \in  V(G)\}.$$
Let $A=N_G(u^*)$ and $B=V(G)\setminus(A\cup \{u^*\})$. Moreover, we partition $A$ into two subsets $A_0$ and $A_+$, where $A_0=\{u\in A \mid d_A(u)=0\}$ and $A_+=A\setminus A_0$. It is clear that 
\begin{align}\label{equ::3.12}
m=|A|+e(A_+)+e(A,B)+e(B).
\end{align}
Since $A(G)\mathbf{x}=\rho\mathbf{x}$, we have
\begin{align}\label{equ::3.9}
\rho x_{u^*}=\sum_{u \in A} x_u=\sum_{u \in A_+} x_u+\sum_{v \in A_0} x_v.
\end{align}
Additionally, we obtain $A^2(G)\mathbf{x}=\rho^2\mathbf{x}$, and hence
\begin{align}\label{equ::3.2}
\rho^2x_{u^*}=\sum_{v \in V(G)} a^{(2)}_{u^*v}x_v=|A|x_{u^*}+\sum_{u \in A_+}d_{A}(u) x_u+\sum_{w \in B}d_{A}(w) x_w,
\end{align}
where $a^{(2)}_{u^*v}$ denotes the number of walks of length $2$ starting from $u^*$ to $v$.
Since $K_3 \vee \frac{m-3}{3}K_1$ is an $H_7$-free graph with size $m$, by Lemma \ref{lem::2.4} we have
\begin{align}\label{equ::3.3}
\rho\geq\rho( K_3 \vee \frac{m-3}{3}K_1) = 1+\sqrt{m-2},
\end{align}
which deduces $\rho^2-2\rho\geq m-3$. Owing to $m\geq33$, we obtain 
\begin{align}\label{equ::3.1}
\rho\geq1+\sqrt{m-2}\geq 1+\sqrt{31}.
\end{align}
It follows from \eqref{equ::3.12}-\eqref{equ::3.2} that
\begin{align*}
(m-3)x_{u^*}\leq(\rho^2-2\rho)x_{u^*}
&=|A|x_{u^*}+\sum_{u \in A_+}(d_{A}(u)-2) x_u-2\sum_{v \in A_0} x_v+\sum_{w \in B}d_{A}(w) x_w\\
&\leq|A|x_{u^*}+\sum_{u \in A_+}(d_{A}(u)-2) x_u-2\sum_{v \in A_0} x_v+e(A,B))x_{u^*}\\
&=(m-e(B)-e(A_+)+ \sum_{u \in A_+}(d_{A}(u)-2)\frac{x_u}{x_{u^*}} -2\sum_{v \in A_0} \frac{x_v}{x_{u^*}})x_{u^*},
\end{align*}
which indicates
\begin{align}\label{equ::3.4}
e(B) \leq \sum_{u \in A_+}(d_{A}(u)-2)\frac{x_u}{x_{u^*}}-e(A_+)-2\sum_{v \in A_0} \frac{x_v}{x_{u^*}}+3,
\end{align}
equality holds if and only if  $\rho^2-2\rho= m-3$ and $x_{w}=x_{u^{*}}$ for any $ w \in B$ satisfying $d_{A}(w) \geq 1$. Denote by $\varGamma$ the set of all components of $G[A_+]$.
For each component $H\in\varGamma$, we denote
$$
\eta(H):=\sum_{u \in V(H)}(d_{H}(u)-2) \frac{x_u}{x_{u^*}}-e(H).
$$
Then by \eqref{equ::3.4} we obtain
\begin{align}\label{equ::3.7}
e(B) \leq \sum_{H \in \varGamma} \eta(H)-2\sum_{v \in A_0} \frac{x_v}{x_{u^*}}+3.
\end{align}
Since $G$ is $H_7$-free, we know that $G[A]$ is $P_6$-free. Combining this with Lemma \ref{lem::2.3}, we can get the following claim.
\begin{clm}\label{clm::2.1}
For each component $H\in\varGamma$, the following statements hold.

\emph{(i)} For $\delta(H) \geq 2$, 
$$
\eta(H) \leq \begin{cases}0 & \text { if } H\cong K_5, \\ -1 & \text { if } H\cong K_5-e, \\ -2 & \text { if } H\cong K_4 \text { or } K_5-2 e, \\ -3 & \text { otherwise, }\end{cases}
$$
equality holds only if $x_u=x_{u^*}$ for all $u \in V(H)$ with $d_H(u)\geq3$. Particularly, if $|V(H)|\geq6$, then $\eta(H)=-3$ occurs only if $H\cong K_2 \vee (|V(H)|-2)K_1$. 

\emph{(ii)} For $\delta(H) = 1$, 
$$
\eta(H) < \begin{cases} -1 & \text { if } H\cong K_2, \\ -2 & \text { if } H\cong K_{1,s} \;(s\geq 2) \text { or } K_1 \vee (K_3 \cup (|V(H)|-4)K_1) \;(|V(H)|\geq5), \\ -3 & \text { otherwise. }\end{cases}
$$
\end{clm}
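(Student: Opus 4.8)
The plan is to obtain Claim~\ref{clm::2.1} as an immediate corollary of Lemma~\ref{lem::2.3}: essentially all the analytic work has already been done there, so what remains is only to check that Lemma~\ref{lem::2.3} applies in the present situation and to reconcile the two phrasings of the extremal case.

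First I would verify the hypothesis of Lemma~\ref{lem::2.3} that is not yet on record, namely that $G[A]=G[N_G(u^*)]$ is $P_6$-free. Suppose not, and let $v_1v_2\cdots v_6$ be a $P_6$ in $G[A]$. Since $A=N_G(u^*)$, the vertex $u^*$ is adjacent to each $v_i$, so $G[\{u^*,v_1,\dots,v_6\}]$ contains $K_1\vee P_6=H_7$ as a subgraph, contradicting that $G$ is $H_7$-free. Hence $G[A]$ is $P_6$-free. This reduction is really the only place where the $H_7$-freeness hypothesis is used for this claim, so I regard it as the ``crux'', although it is short.

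Next I would note that $\varGamma$, the set of components of $G[A_+]$, is exactly the set of non-trivial components of $G[N_G(u^*)]$, because $A_+=A\setminus A_0$ is obtained from $A=N_G(u^*)$ by deleting precisely the vertices that are isolated in $G[A]$; in particular any $H\in\varGamma$ is a non-trivial component of $G[N_G(u^*)]$, and the quantity $\eta(H)$ defined in the proof of Theorem~\ref{thm::1.1} coincides with the one in Lemma~\ref{lem::2.3}. Since $G$ is connected (so $A(G)$ is nonnegative and irreducible), $\mathbf{x}$ is the Perron vector, and $u^*$ maximizes $x_u$, every hypothesis of Lemma~\ref{lem::2.3} holds. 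Applying it to each $H\in\varGamma$ yields the bounds on $\eta(H)$ in parts (i) and (ii) verbatim, together with the structural statements in the (near-)equality cases, including the conclusion that $\eta(H)=-3$ with $|V(H)|\geq 6$ forces $H\cong K_2\vee(|V(H)|-2)K_1$.

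Finally, I would translate the equality condition in part (i): there $\delta(H)\geq 2$, so every vertex of $H$ has degree at least $2$, whence $V(H)\setminus\{v\in V(H):d_H(v)=2\}$ is exactly $\{u\in V(H):d_H(u)\geq 3\}$; substituting this into the conclusion of Lemma~\ref{lem::2.3}(i) gives precisely the statement of Claim~\ref{clm::2.1}(i). Part (ii) is inherited with no change. I do not expect a genuine obstacle in this claim — the only points requiring any care are the $P_6$-free reduction above and the harmless reformulation of the extremal vertex set — and the real difficulties of the paper should instead appear when these component bounds are combined via \eqref{equ::3.7} to pin down the global structure of $G$.
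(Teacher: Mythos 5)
Your proposal is correct and follows exactly the paper's route: the paper also derives the claim by observing that $H_7$-freeness of $G$ forces $G[N_G(u^*)]$ to be $P_6$-free and then invoking Lemma~\ref{lem::2.3}, with the equality condition restated via $V(H)\setminus\{v:d_H(v)=2\}=\{u:d_H(u)\geq 3\}$ when $\delta(H)\geq 2$. Your write-up merely makes explicit the details the paper leaves implicit.
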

According to Claim \ref{clm::2.1}, we find that $\eta(H)\leq 0$ for each component $H \in \varGamma$. Together with \eqref{equ::3.7}, we obtain
\begin{align}\label{equ::3.5}
e(B) \leq 3.
\end{align} 
\begin{clm}\label{clm::2.2}
$H \ncong K_5$  for any component $H \in \varGamma$.
\end{clm}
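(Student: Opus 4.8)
The plan is to argue by contradiction: suppose some component $H \in \varGamma$ is isomorphic to $K_5$. The first observation is that, since $G[A]$ is $P_6$-free and $H$ sits inside $G[A_+] = G[N_G(u^*)]$, the vertex $u^*$ together with $V(H)$ forms a $K_6$ in $G$. The key point is that $H_7 = K_1 \vee P_6$ contains $K_1 \vee (P_4 \cup P_2)$, and more to the point it contains any graph of the form $K_1 \vee (\text{spanning subgraph of } P_6)$; I would look for a $P_6$ inside $G[A]$ using the five vertices of $H$ plus one more neighbor, which would immediately produce a forbidden $H_7$ with apex $u^*$. So I first need to rule out that any vertex of $A \setminus V(H)$ or of $B$ attaches to $H$ in a way that creates a $P_6$ in $G[A]$ or an $H_7$ overall.

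Concretely, the main steps are: (1) From Claim \ref{clm::2.1}(i), $\eta(K_5) \le 0$ with equality only if $x_u = x_{u^*}$ for every $u \in V(H)$ (all have degree $4 \ge 3$); I would use \eqref{equ::3.7} and \eqref{equ::3.5} to pin down that if $H \cong K_5$ occurs then in fact $\varGamma = \{H\}$, $A_0 = \emptyset$, $e(B) \le 3$, and all the inequalities leading to \eqref{equ::3.4} are tight, forcing $x_w = x_{u^*}$ for every $w \in B$ with $d_A(w) \ge 1$ and $\rho^2 - 2\rho = m-3$, i.e. $\rho = 1 + \sqrt{m-2}$. (2) Then I would show $B = \emptyset$ or, if not, derive a contradiction with $H_7$-freeness: any vertex $w \in B$ adjacent to some vertex of $V(H)$, combined with the $K_5$ and $u^*$, yields a $P_6$ in $G[N_G(u^*)]$ (take $w$–$v_1$–$v_2$–$v_3$–$v_4$–$v_5$ along $H$ with $w v_1$ an edge — wait, $w \notin N_G(u^*)$, so instead I use that $w$ adjacent to $v_1 \in V(H)$ gives the path $w v_1 v_2 v_3 v_4 v_5$ and then $u^* \vee$ this... $u^*$ is adjacent to $v_1,\dots,v_5$ but not $w$, so this is $K_1 \vee (\text{path on } 5) $ plus a pendant — I need to be careful here and instead locate $H_7$ directly: $u^*, v_1, v_2, v_3, v_4, v_5$ with $u^*$ the apex already gives $K_1 \vee K_5 \supseteq K_1 \vee P_5 = H_6$, and adjoining $w$ to $v_1$ extends $P_5$ to $P_6$, but the apex $u^*$ must be adjacent to all of $P_6$ including $w$, which fails). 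The honest approach is: $G[A]$ is $P_6$-free but a $K_5$ plus one more adjacent vertex inside $A$ would contain $P_6$; hence no vertex of $A_+ \setminus V(H)$ is adjacent to $V(H)$, consistent with $H$ being a component, and I must instead get the contradiction from counting.

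Therefore the real engine is a size/eigenvalue count. From \eqref{equ::3.12} with $A_0 = \emptyset$, $\varGamma = \{K_5\}$, $e(A_+) = 10$, and $e(B) \le 3$, we get $|A| = 5$ and $m = 5 + 10 + e(A,B) + e(B)$, so $e(A,B) = m - 15 - e(B) \ge m - 18$. On the other hand $x_u = x_{u^*}$ for all $u \in V(H) = A$, and \eqref{equ::3.9} gives $\rho x_{u^*} = \sum_{u\in A} x_u = 5 x_{u^*}$, forcing $\rho = 5$; but \eqref{equ::3.1} gives $\rho \ge 1 + \sqrt{31} > 5$, a contradiction. I expect this last computation — showing the equality conditions of Claim \ref{clm::2.1}(i) force $A = V(H)$ with all Perron entries equal and hence $\rho = |A| = 5$, which is incompatible with $\rho \ge 1+\sqrt{31}$ for $m \ge 33$ — to be the crux, and the only mild obstacle is handling the case $e(B) > 0$ (where not all of \eqref{equ::3.4}'s slack is available) carefully enough that $\eta(K_5) = 0$ must still hold exactly, which follows since $\eta(H) \le 0$ for every component and $e(B) \ge 0$ in \eqref{equ::3.7}.
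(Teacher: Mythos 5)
There is a genuine gap at the step you yourself flag as the crux. You claim that \eqref{equ::3.7} together with $e(B)\ge 0$ forces $\eta(K_5)=0$ exactly, hence $\varGamma=\{H\}$, $A_0=\emptyset$, $x_u=x_{u^*}$ for all $u\in V(H)$, and then $\rho=|A|=5$ via \eqref{equ::3.9}. But \eqref{equ::3.7} only yields $0\le e(B)\le \eta(K_5)+(\text{nonpositive terms})+3$, i.e.\ a \emph{lower} bound $\eta(K_5)\ge -3$ after discarding the other terms; since Claim \ref{clm::2.1} already gives $\eta(K_5)\le 0$, nothing is pinned down. There is slack of up to $3$ in \eqref{equ::3.4}, so none of the tightness conclusions (equality in $\eta$, emptiness of $A_0$, uniqueness of the component, $x_w=x_{u^*}$ on $B$, $\rho^2-2\rho=m-3$) follow, and the final computation $\rho=5$ collapses. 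Your earlier structural attempt also stalls for the reason you half-notice: a vertex $w\in B$ with one neighbor in the $K_5$ creates no $H_7$ with apex $u^*$ (since $u^*w\notin E(G)$); what $H_7$-freeness actually rules out is $d_{V(H)}(w)\ge 2$, seen by taking the apex to be a vertex of $H$ adjacent to $w$ (then $u^*$ can serve as an endpoint of the $P_6$). You never exploit this.

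The paper's argument runs in the opposite direction: rather than forcing $\eta(K_5)=0$, it shows $\eta(K_5)$ is strictly and quantifiably negative. Writing $V(H)=\{u_1,\dots,u_5\}$, the eigenvalue equation $\rho x_{u_i}=\sum_{v\sim u_i}x_v$ is applied to the $K_5$-vertices, whose neighborhoods are confined to $V(H)\cup\{u^*\}$ plus possibly $B$. If no $u_i$ has a neighbor in $B$, all $x_{u_i}$ are equal and $x_{u_i}=\frac{1}{\rho-4}x_{u^*}<\frac25 x_{u^*}$ by \eqref{equ::3.1}, so $\eta(K_5)=10\frac{x_{u_1}}{x_{u^*}}-10<-6$ and \eqref{equ::3.7} gives $e(B)<-3$. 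Otherwise, $H_7$-freeness plus the rotation Lemma \ref{lem::2.2} (using $e(B)\le 3$ from \eqref{equ::3.5} to keep the switched graph $H_7$-free) force all $B$-neighbors onto the single vertex of $V(H)$ of largest Perron weight, whence $x_{u_1}=\dots=x_{u_4}<\frac58x_{u^*}$ and again $e(B)<0$. Both contradict $e(B)\ge 0$. If you want to repair your write-up, you need these explicit upper bounds on the Perron entries of the $K_5$-vertices; the equality-forcing route is not available.
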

\begin{proof}
Suppose to the contrary that $H$ is a component in $\varGamma$ such that $H\cong K_5$. Let $V(H)=\{u_1,u_2,u_3,u_4,u_5\}$ with $x_{u_5}\geq x_{u_4}\geq x_{u_3}\geq x_{u_2}\geq x_{u_1}$. Clearly, $d_H(u_i)=4$ for each $i \in \{1,2,3,4,5\}$. We will distinguish two cases to lead a contradiction, respectively.

{\flushleft\bf Case 1.} \textbf{$\bigcup\limits_{i=1}^5N_B(u_i)=\emptyset$.}

In this case, we have $x_{u_1}=x_{u_2}=x_{u_3}=x_{u_4}=x_{u_5}$ and $\rho x_{u_1}=x_{u_2}+x_{u_3}+x_{u_4}+x_{u_5}+x_{u^*}$. Together with  \eqref{equ::3.1}, we obtain  $x_{u_1}=\frac{1}{\rho-4}x_{u^*}<\frac{2}{5}x_{u^*}$. Thus, by \eqref{equ::3.7} we get
\begin{align*}
e(B) &\leq \eta(H) + \sum_{H^{\prime} \in \varGamma \setminus \{H\}} \eta(H^{\prime})-2\sum_{v \in A_0} \frac{x_v}{x_{u^*}}+3\\
&=\sum_{i=1}^5(d_{H}(u_i)-2) \frac{x_{u_i}}{x_{u^*}}-e(H)+ \sum_{H^{\prime} \in \varGamma \setminus \{H\}} \eta(H^{\prime})-2\sum_{v \in A_0} \frac{x_v}{x_{u^*}}+3\\
&=10\cdot\frac{x_{u_1}}{x_{u^*}}-10+ \sum_{H^{\prime} \in \varGamma \setminus \{H\}} \eta(H^{\prime})-2\sum_{v \in A_0} \frac{x_v}{x_{u^*}}+3\\
&<-3,
\end{align*}
a contradiction.

{\flushleft\bf Case 2.} \textbf{$\bigcup\limits_{i=1}^5N_B(u_i)\neq\emptyset$.}

In this case, let $w$ be an arbitrary vertex in $\bigcup_{i=1}^5N_B(u_i)$. Since $G$ is $H_7$-free, we see that $|N_{G}(w) \cap \{u_1, u_2, u_3,u_4,u_5\}|=1$. Indeed, $N_{G}(w) \cap \{u_1, u_2, u_3,u_4,u_5\}=\{u_5\}$. Otherwise, we may assume without loss of generality that $u_1$ is the unique vertex in $N_{G}(w) \cap \{u_1, u_2, u_3,u_4,u_5\}$. Let $G^{\prime}= G-wu_1+wu_5$. Recall that $e(B) \leq3$ from \eqref{equ::3.5}. Then one can observe that $G^{\prime}$ is still an $H_{7}$-free graph with size $m$. Owing to $x_{u_5}\geq x_{u_1}$, by Lemma \ref{lem::2.2} we obtain $\rho(G^{\prime})>\rho$, a contradiction. Thus, $N_B(u_i)=\emptyset$ for each $i\in \{1,2,3,4\}$, which indicates that $x_{u_1}=x_{u_2}=x_{u_3}=x_{u_4}$ and $\rho x_{u_1}=x_{u_2}+x_{u_3}+x_{u_4}+x_{u_5}+x_{u^*}$. Together with \eqref{equ::3.1}, we have $x_{u_1}\leq\frac{2}{\rho-3}x_{u^*}<\frac{5}{8}x_{u^*}$. Hence, by \eqref{equ::3.7} we obtain
\begin{align*}
e(B) &\leq \eta(H) + \sum_{H^{\prime} \in \varGamma \setminus \{H\}} \eta(H^{\prime})-2\sum_{v \in A_0} \frac{x_v}{x_{u^*}}+3\\
&=\sum_{i=1}^5(d_{H}(u_i)-2) \frac{x_{u_i}}{x_{u^*}}-e(H)+ \sum_{H^{\prime} \in \varGamma \setminus \{H\}} \eta(H^{\prime})-2\sum_{v \in A_0} \frac{x_v}{x_{u^*}}+3\\
&=8\cdot\frac{x_{u_1}}{x_{u^*}}+2\cdot\frac{x_{u_5}}{x_{u^*}}-10+ \sum_{H^{\prime} \in \varGamma \setminus \{H\}} \eta(H^{\prime})-2\sum_{v \in A_0} \frac{x_v}{x_{u^*}}+3\\
&<0,
\end{align*}
which is also a contradiction.
\end{proof}	
\begin{clm}\label{clm::2.4}
$e(A_+) \geq 11$.
\end{clm}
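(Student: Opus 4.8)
The plan is to derive the bound directly from the second-moment identity \eqref{equ::3.2}, the edge-partition identity \eqref{equ::3.12}, and the lower bound \eqref{equ::3.3} on $\rho$; no refined analysis of the components of $G[A_+]$ is needed here. First I would estimate the right-hand side of \eqref{equ::3.2} from above. Since $x_v\le x_{u^*}$ for every $v\in V(G)$, and since $\sum_{v\in A_+}d_A(v)=2e(A_+)$ (vertices of $A_0$ have no neighbour in $A$) while $\sum_{w\in B}d_A(w)=e(A,B)$, we obtain
\[
\rho^2 x_{u^*}=|A|x_{u^*}+\sum_{v\in A_+}d_A(v)x_v+\sum_{w\in B}d_A(w)x_w\le\bigl(|A|+2e(A_+)+e(A,B)\bigr)x_{u^*},
\]
and hence $\rho^2\le |A|+2e(A_+)+e(A,B)$.

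Next I would eliminate $|A|+e(A,B)$ using \eqref{equ::3.12}, which gives $|A|+e(A,B)=m-e(A_+)-e(B)$, so that
\[
\rho^2\le m+e(A_+)-e(B)\le m+e(A_+).
\]
On the other hand, \eqref{equ::3.3} gives $\rho\ge 1+\sqrt{m-2}$, whence $\rho^2-2\rho\ge m-3$ and therefore
\[
\rho^2\ge m-3+2\rho\ge m-3+2\bigl(1+\sqrt{m-2}\bigr)=m-1+2\sqrt{m-2}.
\]
Comparing the two bounds on $\rho^2$ yields $e(A_+)\ge 2\sqrt{m-2}-1$. Since $m\ge 33$, we have $2\sqrt{m-2}-1\ge 2\sqrt{31}-1>10$, and because $e(A_+)$ is a nonnegative integer this forces $e(A_+)\ge 11$, as claimed.

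I do not expect a genuine obstacle here, since the whole argument is an elementary Rayleigh-type estimate combined with the edge count. The one delicate point is that the final numerical inequality is tight: $2\sqrt{31}-1\approx 10.14$ only just exceeds $10$, so the hypothesis $m\ge 33$ is consumed precisely at the last step, and one must be careful not to discard the term $2\sqrt{m-2}$ (equivalently, the full strength of $\rho\ge 1+\sqrt{m-2}$) too early in the chain of inequalities.
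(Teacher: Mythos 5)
Your proof is correct and uses exactly the same ingredients as the paper's: bounding $\rho^2 x_{u^*}$ via \eqref{equ::3.2} with $x_v\le x_{u^*}$, substituting the edge partition \eqref{equ::3.12}, and comparing with $\rho\ge 1+\sqrt{m-2}$. The only difference is cosmetic — the paper argues by contradiction from $e(A_+)\le 10$, while you derive $e(A_+)\ge 2\sqrt{m-2}-1>10$ directly — and your numerical check at $m=33$ is accurate.
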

\begin{proof}
Suppose to the contrary that $e(A_+) \leq 10$. In view of \eqref{equ::3.12} and \eqref{equ::3.2}, we obtain
\begin{align*}
\rho^2x_{u^*}&=|A|x_{u^*}+\sum_{u \in A_+}d_{A}(u) x_u+\sum_{w \in B}d_{A}(w) x_w\\
&\leq (|A|+2e(A_+)+e(A,B))x_{u^*}=(m-e(B)+e(A_+))x_{u^*}\leq(m+10)x_{u^*},
\end{align*}
and so $\rho\leq \sqrt{m+10} <1+\sqrt{m-2}$ due to $m\geq33$,  which contradicts \eqref{equ::3.3}.
\end{proof}		
\begin{clm}\label{clm::2.3}
$H \ncong K_5-e$  for any component $H \in \varGamma$.
\end{clm}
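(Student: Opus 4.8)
The plan is to adapt the proof of Claim~\ref{clm::2.2}. Suppose to the contrary that some component $H_0\in\varGamma$ satisfies $H_0\cong K_5-e$, and write $V(H_0)=\{u_1,\dots,u_5\}$ with $u_1u_2$ the non-edge, so that $d_{H_0}(u_1)=d_{H_0}(u_2)=3$, $d_{H_0}(u_3)=d_{H_0}(u_4)=d_{H_0}(u_5)=4$ and $e(H_0)=9$; by Claim~\ref{clm::2.1}(i) we have $\eta(H_0)\le-1$, while $\eta\le0$ on every component by Claim~\ref{clm::2.1}. I would then split into two cases according to whether $\bigcup_{i=1}^5 N_B(u_i)=\emptyset$.

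\textbf{Case 1: $\bigcup_{i=1}^5 N_B(u_i)=\emptyset$.} Here the Perron equations restricted to $H_0$ read $\rho x_{u_1}=x_{u_3}+x_{u_4}+x_{u_5}+x_{u^*}$ (and likewise for $u_2$) and $\rho x_{u_3}=x_{u_1}+x_{u_2}+x_{u_4}+x_{u_5}+x_{u^*}$ (and likewise for $u_4,u_5$), so $x_{u_1}=x_{u_2}=:a$ and $x_{u_3}=x_{u_4}=x_{u_5}=:b$. Eliminating $x_{u^*}$ gives $a(\rho+2)=b(\rho+1)$ and $x_{u^*}=b\,(\rho^2-2\rho-6)/(\rho+2)$, whence $b/x_{u^*}=(\rho+2)/(\rho^2-2\rho-6)<\tfrac12$ by \eqref{equ::3.1}. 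Consequently $\eta(H_0)=(2a+6b)/x_{u^*}-9<8b/x_{u^*}-9<-5$, and then \eqref{equ::3.7} forces $e(B)<0$, a contradiction.

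\textbf{Case 2: some $w\in B$ is adjacent to some $u_i$.} First I would use the $H_7$-freeness of $G$ and the $P_6$-freeness of $G[A]$: taking $u_i$ as a potential hub and noting that $\{u^*\}\cup(V(H_0)\setminus\{u_i\})$ induces a $K_5-e$, which admits a Hamiltonian $P_5$ ending at each of its vertices, one deduces (a) if $u_i\in\{u_3,u_4,u_5\}$ then $N_G(w)\cap V(H_0)=\{u_i\}$, and (b) if $u_i\in\{u_1,u_2\}$ then $w$ is adjacent to no vertex of $\{u_3,u_4,u_5\}$. The central move is the shift $G':=G-wu_i+wu^*$, which strictly increases the spectral radius by Lemma~\ref{lem::2.2} (since $x_{u^*}\ge x_{u_i}$ and $w\notin N_G(u^*)$); so it suffices to keep $G'$ $H_7$-free in order to contradict the maximality of $G$. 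This works whenever $u_i$ is the \emph{only} neighbour of $w$ in $A$: then $w$ is isolated in $G'[A\cup\{w\}]=N_{G'}(u^*)$, so no $P_6$ runs through the hub $u^*$, the bound $e(B)\le3$ prevents $w$ itself from being a hub, and no new hub can see both $w$ and $u^*$. More generally, the same shift still works as long as every neighbour of $w$ in $A$ other than $u_i$ lies in $A_0$, since a vertex of $A_0$ is the endpoint of no path in $G[A]$.

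Thus the only $w$'s requiring extra care are those whose second neighbour in $A$ is either both of $u_1,u_2$ (possible, by (a), only when $u_i\in\{u_1,u_2\}$, and then $u_1u^*u_2w$ is a $4$-cycle) or a vertex of a second component $H'\in\varGamma$. In the first situation one exhibits an $H_7$ directly inside $G$ from $H_0$, $u^*$ and this $4$-cycle, or discards the configuration against $e(B)\le3$; in the second one instead performs the shift $G-wu_i+wu_j$ toward the maximum-weight vertex $u_j$ of $H_0$, checking $H_7$-freeness of the new graph from (a), (b) and $e(B)\le3$. In every case a contradiction arises, so in fact $\bigcup_{i=1}^5 N_B(u_i)=\emptyset$, which is impossible by Case~1. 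I expect the bookkeeping in Case~2 — verifying which edge-shifts preserve $H_7$-freeness given the asymmetry of $K_5-e$ (degree-$3$ versus degree-$4$ vertices), and eliminating the handful of exceptional adjacency patterns with only $e(B)\le3$ in hand — to be the main obstacle.
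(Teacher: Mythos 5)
Your Case~1 is correct (indeed sharper than necessary), and your central move --- shifting $wu_i$ to $wu^*$ when $u_i$ is $w$'s only neighbour in $A_+$ --- is sound: $w$ becomes isolated in $G'[N_{G'}(u^*)]$, it cannot itself be a hub since $d_{G'}(w)\le 1+e(B)\le 4$ by \eqref{equ::3.5}, and no other vertex is adjacent to both $w$ and $u^*$ in $G'$. In that sub-case your shift is genuinely different from (and stronger than) the paper's, which only relocates attachments to the maximum-weight vertex inside $H$. But Case~2 has real gaps. First, by drawing the case boundary at $\bigcup_{i=1}^5 N_B(u_i)=\emptyset$ you must treat a $w\in B$ adjacent to both degree-$3$ vertices $u_1,u_2$, and neither of your proposed resolutions works there: no $H_7$ need exist in $G$ in that configuration (each candidate hub $u_3,u_4,u_5,u^*,u_1,u_2,w$ sees too few of the relevant vertices or a $P_6$-free neighbourhood), and $wu_1,wu_2$ are $A$--$B$ edges, so $e(B)\le3$ says nothing about them; meanwhile the shift $G-wu_1+wu^*$ creates the path $w\,u_2\,u_3\,u_1\,u_4\,u_5$ inside $N_{G'}(u^*)$, i.e.\ an $H_7$. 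The paper sidesteps this entirely: its Case~1 is ``no $B$-neighbours at the three degree-$4$ vertices,'' under which the estimate $\rho x_{u}\le 2x_{u}+3x_{u^*}$ for a degree-$4$ vertex $u$ goes through regardless of attachments at the degree-$3$ vertices, so those attachments never need to be analysed.

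Second, your sub-case where $w$ has a further neighbour in another component $H'$ has no endgame: shifting $wu_i$ to the maximum-weight vertex $u_j$ of $H$ is vacuous when $u_i=u_j$, and even if every such shift succeeds you are left with all $B$-attachments of $H$ concentrated at one vertex --- a configuration you never refute, and which cannot be refuted by edge-shifting alone. This is precisely where the paper needs its second phase: it deduces that the two non-maximal degree-$4$ vertices satisfy $x_{u}<\tfrac45 x_{u^*}$, invokes Claim~\ref{clm::2.4} to produce a second component $H'$, and runs the $\eta$-bookkeeping of \eqref{equ::3.7} with Claim~\ref{clm::2.1} (eliminating $H'\cong K_5-e$ and $H'\cong K_2$ separately) to force $\eta(H)=-1$ with equality, hence $x_{u}=x_{u^*}$, contradicting the eigenvector bound. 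Your ``more generally'' extension to $w$ with extra neighbours in $A_0$ is also not airtight: with three $A_0$-neighbours and two suitably joined $B$-neighbours, $G'$ can acquire an $H_7$ with hub $w$. In short, the skeleton is reasonable but the residual configurations of Case~2 require an eigenvector/counting argument of the paper's type, which the proposal does not supply.
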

\begin{proof}
Suppose to the contrary that $H$ is a component in $\varGamma$ such that $H\cong K_5-e$. Let $V(H)=\{u_1,u_2,u_3,u_4,u_5\}$ with $d_H(u_1)=d_H(u_2)=d_H(u_3)=4$, $d_H(u_4)=d_H(u_5)=3$ and $x_{u_3}\geq x_{u_2}\geq x_{u_1}$. We will distinguish two cases to lead a contradiction, respectively.

{\flushleft\bf Case 1.} \textbf{$\bigcup\limits_{i=1}^3N_B(u_i)=\emptyset$.}

In this case, we have $x_{u_1}=x_{u_2}=x_{u_3}$ and $\rho x_{u_1}=x_{u_2}+x_{u_3}+x_{u_4}+x_{u_5}+x_{u^*}$. Together with \eqref{equ::3.1}, we obtain  $x_{u_1}\leq\frac{3}{\rho-2}x_{u^*}<\frac{2}{3}x_{u^*}$.  Thus, by \eqref{equ::3.7} we get
\begin{align*}
e(B) &\leq \eta(H) + \sum_{H^{\prime} \in \varGamma \setminus \{H\}} \eta(H^{\prime})-2\sum_{v \in A_0} \frac{x_v}{x_{u^*}}+3\\
&=\sum_{i=1}^5(d_{H}(u_i)-2) \frac{x_{u_i}}{x_{u^*}}-e(H)+ \sum_{H^{\prime} \in \varGamma \setminus \{H\}} \eta(H^{\prime})-2\sum_{v \in A_0} \frac{x_v}{x_{u^*}}+3\\
&=6\cdot\frac{x_{u_1}}{x_{u^*}}+\frac{x_{u_4}}{x_{u^*}}+\frac{x_{u_5}}{x_{u^*}}-9+ \sum_{H^{\prime} \in \varGamma \setminus \{H\}} \eta(H^{\prime})-2\sum_{v \in A_0} \frac{x_v}{x_{u^*}}+3\\
&<0,
\end{align*}
a contradiction. 

{\flushleft\bf Case 2.} \textbf{$\bigcup\limits_{i=1}^3N_B(u_i)\neq\emptyset$.}

In this case, let $w$ be an arbitrary vertex in $ \bigcup_{i=1}^3N_B(u_i)$. Since $G$ is $H_7$-free, we see that $|N_{G}(w) \cap \{u_1, u_2, u_3\}|=1$ and $N_{G}(w) \cap \{u_4, u_5\}=\emptyset$. Indeed, $N_{G}(w) \cap \{u_1, u_2, u_3\}=\{u_3\}$. Otherwise, we may assume without loss of generality that $u_1$ is the unique vertex in $N_{G}(w) \cap \{u_1, u_2, u_3\}$. Let $G^{\prime}= G-wu_1+wu_3$. Since $e(B) \leq3$ from \eqref{equ::3.5}, we can find that $G^{\prime}$ is also an $H_{7}$-free graph with size $m$. Due to $x_{u_3}\geq x_{u_1}$, by Lemma \ref{lem::2.2} we obtain $\rho(G^{\prime})>\rho$, a contradiction. Hence, $N_B(u_1)=N_B(u_2)=\emptyset$, which implies that $x_{u_1}=x_{u_2}$ and $\rho x_{u_1}=x_{u_2}+x_{u_3}+x_{u_4}+x_{u_5}+x_{u^*}$. Together with \eqref{equ::3.1}, we obtain  
\begin{align}\label{equ::3.10}
x_{u_1}\leq\frac{4}{\rho-1}x_{u^*}<\frac{4}{5}x_{u^*}.
\end{align}
Notice that $e(A_+)\geq 11$ from Claim \ref{clm::2.4}. Then there exists a component $H^{\prime}\in\varGamma \setminus \{H\}$. If $H^{\prime}\cong K_5-e$, then let $V(H^{\prime})=\{u^{\prime}_1,u^{\prime}_2,u^{\prime}_3,u^{\prime}_4,u^{\prime}_5\}$ with $d_{H^{\prime}}(u^{\prime}_1)=d_{H^{\prime}}(u^{\prime}_2)=d_{H^{\prime}}(u^{\prime}_3)=4$, $d_{H^{\prime}}(u^{\prime}_4)=d_{H^{\prime}}(u^{\prime}_5)=3$ and $x_{u^{\prime}_3}\geq x_{u^{\prime}_2}\geq x_{u^{\prime}_1}$. By the above discussion, we get $x_{u^{\prime}_1}=x_{u^{\prime}_2}<\frac{4}{5}x_{u^*}$.
Therefore, by \eqref{equ::3.7} we have
\begin{align*}
e(B) \leq& \eta(H)+\eta(H^{\prime}) + \sum_{H^{\prime\prime} \in \varGamma \setminus \{H,H^{\prime}\}} \eta(H^{\prime\prime})-2\sum_{v \in A_0} \frac{x_v}{x_{u^*}}+3\\
=&\sum_{i=1}^5(d_{H}(u_i)-2) \frac{x_{u_i}}{x_{u^*}}-e(H)+\sum_{j=1}^5(d_{H^{\prime}}(u^{\prime}_j)-2) \frac{x_{u^{\prime}_j}}{x_{u^*}}-e(H^{\prime})+\\
& \sum_{H^{\prime\prime} \in \varGamma \setminus \{H,H^{\prime}\}} \eta(H^{\prime\prime})-2\sum_{v \in A_0} \frac{x_v}{x_{u^*}}+3\\
=&4\cdot\frac{x_{u_1}}{x_{u^*}}+2\cdot\frac{x_{u_3}}{x_{u^*}}+\frac{x_{u_4}}{x_{u^*}}+\frac{x_{u_5}}{x_{u^*}}+4\cdot\frac{x_{u^{\prime}_1}}{x_{u^*}}+2\cdot\frac{x_{u^{\prime}_3}}{x_{u^*}}+\frac{x_{u^{\prime}_4}}{x_{u^*}}+\frac{x_{u^{\prime}_5}}{x_{u^*}}-18+ \\
&\sum_{H^{\prime\prime} \in \varGamma \setminus \{H,H^{\prime}\}} \eta(H^{\prime\prime})-2\sum_{v \in A_0} \frac{x_v}{x_{u^*}}+3\\
<&-\frac{3}{5},
\end{align*}
a contradiction. Thus, $H^{\prime}\ncong K_5-e$. According to \eqref{equ::3.7}, we get
\begin{align}\label{equ::3.14}
\eta(H)+\eta(H^{\prime})\geq -3.
\end{align}
From Claim \ref{clm::2.1} (i), we know
\begin{align}\label{equ::3.8}
\eta(H)=\eta(K_5-e)\leq -1,
\end{align}
equality holds only if $x_{u_i}=x_{u^*}$ for each $i\in\{1,2,3,4,5\}$.
If $H^{\prime}\cong K_2$, then by \eqref{equ::3.14}-\eqref{equ::3.8} and Claim \ref{clm::2.1}, we find that $\varGamma$ consists of $H$ and $ H^{\prime}$. Thus, $G[A_+]\cong (K_5-e)\cup K_2$, which contradicts $e(A_+) \geq 11$. Hence, $H^{\prime}\ncong K_2$. Notice that $H^{\prime}\ncong K_5$ from Claim \ref{clm::2.2}. Therefore, according to Claim \ref{clm::2.1}, we see that $\eta(H^{\prime})\leq-2$. Combining this with \eqref{equ::3.14} and \eqref{equ::3.8}, we obtain $\eta(H^{\prime})=-2$ and $\eta(H)=-1$. That is, the equality in \eqref{equ::3.8} holds, and so $x_{u_1}=x_{u^*}$, which  contradicts \eqref{equ::3.10}.
\end{proof}		

Note that  $e(A_+) \geq 11$ from Claim \ref{clm::2.4}. Thus, there is at least one component in $\varGamma$ and $|A_+|\geq 6$. By Claims \ref{clm::2.2} and \ref{clm::2.3}, we have $H\notin \{K_5, K_5-e\}$ for each component $H\in \varGamma$. Suppose that there are two components $H, H^{\prime} \in \varGamma$. Then by \eqref{equ::3.7} we obtain $\eta(H)+\eta(H^{\prime})\geq-3$. 
Together with Claim \ref{clm::2.1}, we know that both $H$ and $ H^{\prime}$ must be isomorphic to $ K_2$, and $\varGamma$ consists of $H$ and $ H^{\prime}$. So, $G[A_+]\cong2K_2$, which contradicts $e(A_+) \geq 11$. Hence, $\varGamma$ contains exactly one component, that is, $G[A_+]$ is connected. 

If $G[A_+]\cong K_1 \vee (K_3 \cup (|A_+|-4)K_1)$, then let $A_+=\{u_0,u_1,u_2,u_3,v_1, v_2,\dots,v_{t}\}$, where $d_{A}(u_0)=t+3$, $d_{A}(u_i)=3$ for each $i\in \{1,2,3\}$ and $d_{A}(v_j)=1$ for each $j\in \{1,2,\dots,t\}$. It follows from \eqref{equ::3.4} that
\begin{align*}
0\leq e(B) &\leq \sum_{u \in A_+}(d_{A}(u)-2)\frac{x_u}{x_{u^*}}-e(A_+)-2\sum_{v \in A_0} \frac{x_v}{x_{u^*}}+3\\
&\leq -2- \sum_{j=1}^{t}\frac{x_{v_j}}{x_{u^*}}-2\sum_{v \in A_0} \frac{x_v}{x_{u^*}}+3=1- \sum_{j=1}^{t}\frac{x_{v_j}}{x_{u^*}}-2\sum_{v \in A_0} \frac{x_v}{x_{u^*}},
\end{align*}
which implies 
\begin{align*}
\sum_{j=1}^{t}x_{v_j} +2\sum_{v \in A_0}x_v\leq x_{u^*}.
\end{align*}
Then
\begin{align*}
\rho x_{u^*}=\sum_{i=0}^{3}x_{u_i}+\sum_{j=1}^{t}x_{v_j} +\sum_{v \in A_0} x_v\leq 4x_{u^*}+\sum_{j=1}^{t}x_{v_j} +2\sum_{v \in A_0}x_v\leq4x_{u^*}+x_{u^*}=5x_{u^*},
\end{align*}
and so $\rho \leq 5$, which contradicts \eqref{equ::3.1}. Hence, $G[A_+]\ncong K_1 \vee (K_3 \cup (|A_+|-4)K_1)$. 

If $G[A_+]\cong K_{1,|A_+|-1}$, then let $A_+=\{u_0,u_1,u_2,\dots,u_{s}\}$, where $d_{A}(u_0)=s$ and $d_{A}(u_i)=1$ for each $i\in \{1,2,\dots,s\}$. It follows from  \eqref{equ::3.4} that
\begin{align*}
0\leq e(B) &\leq \sum_{u \in A_+}(d_{A}(u)-2)\frac{x_u}{x_{u^*}}-e(A_+)-2\sum_{v \in A_0} \frac{x_v}{x_{u^*}}+3\\
&\leq -2- \sum_{i=1}^{s}\frac{x_{u_i}}{x_{u^*}}-2\sum_{v \in A_0} \frac{x_v}{x_{u^*}}+3=1- \sum_{i=1}^{s}\frac{x_{u_i}}{x_{u^*}}-2\sum_{v \in A_0} \frac{x_v}{x_{u^*}},
\end{align*}
which indicates
\begin{align*}
\sum_{i=1}^{s}x_{u_i} +2\sum_{v \in A_0} x_v\leq x_{u^*}.
\end{align*}
Then
\begin{align*}
\rho x_{u^*}= x_{u_0}+\sum_{i=1}^{s}x_{u_i} +\sum_{v \in A_0} x_v\leq x_{u^*}+\sum_{i=1}^{s}x_{u_i} +2\sum_{v \in A_0} x_v\leq x_{u^*}+ x_{u^*}=2x_{u^*},
\end{align*}
and hence $\rho \leq 2$, which contradicts \eqref{equ::3.1}. Therefore, $G[A_+]\ncong K_{1,|A_+|-1}$.

Since  $|A_+|\geq 6$, we get $G[A_+]\notin \{K_2, K_4, K_5-2e\}$.
It follows that $G[A_+]\notin \{K_5, K_5-e, K_4, K_5-2e, K_2,K_1 \vee (K_3 \cup (|A_+|-4)K_1), K_{1,|A_+|-1}\}$.  Together with Claim \ref{clm::2.1}, we obtain 
\begin{align}\label{equ::3.11}
\eta(G[A_+])\leq-3, 
\end{align}
equality holds only if $G[A_+] \cong K_2 \vee (|A_+|-2)K_1$ and $x_u=x_{u^*}$ for all $u \in A_+$ with $d_{A_+}(u)\geq3$.
On the other hand,  we see that  $\eta(G[A_+])\geq-3$ by \eqref{equ::3.7}. Thus, $\eta(G[A_+])=-3$, and so the equality in \eqref{equ::3.11} holds. In view of \eqref{equ::3.7}, we get $A_0=\emptyset$ and $e(B)=0$. Consequently, we obtain $G[A] \cong K_2 \vee (|A|-2)K_1.$ 
Suppose that $B\neq\emptyset$. Clearly, there are two vertices $u_1, u_2 \in A$ such that $d_A(u_1)=d_A(u_2) \geq 3$. It follows that $x_{u_1}=x_{u_2}=x_{u^*}$.  Let $w$ be an arbitrary vertex in $B$. If $N_G(w)\cap \{u_1,u_2\}\neq\emptyset$, then we may assume without loss of generality that $N_B(u_1)\neq\emptyset$, which indicates
\begin{align*}
\rho x_{u_1}=x_{u^*}+\sum_{u\in A\setminus \{u_1\}}x_{u}+\sum_{w^{\prime}\in N_B(u_1)}x_{w^{\prime}}=\sum_{u\in A }x_{u}+\sum_{w^{\prime}\in N_B(u_1)}x_{w^{\prime}}>\sum_{u\in A}x_{u}=\rho x_{u^*},
\end{align*}
and so $x_{u_1}>x_{u^*}$, a contradiction. Hence, $N_G(w)\cap \{u_1,u_2\}=\emptyset$. Recall that $G$ has no isolated vertices and $e(B)=0$. Then $d_A(w) \geq 1$ and $N_G(w)\subseteq A \setminus \{u_1,u_2\} \subsetneq N_G(u^*)$. Thus we have
$$
\rho x_{w}=\sum_{v\in N_G(w)}x_{v}<\sum_{u\in N_G(u^*)}x_{u}=\rho x_{u^*},
$$
which gives $x_w< x_{u^*}$. Notice that the equality in \eqref{equ::3.4} holds. Hence, $x_{w}=x_{u^{*}}$, a contradiction. 
Therefore, $B=\emptyset$, i.e., $G\cong K_{3} \vee (|A|-2)K_1$. Since $G$ is a graph with size $m$,  we have $|A|-2=\frac{m-3}{3}$. That is, $G\cong K_3 \vee \frac{m-3}{3}K_1$, as desired. This completes the proof.
\end{proof}

\section*{Declaration of competing interest}
The authors declare that they have no conflict of interest.

\section*{Data availability}
No data was used for the research described in the article.


\begin{thebibliography}{99}
\setlength{\itemsep}{0pt}

\bibitem{BBR}
R. Bapat, Graphs and Matrices, Springer, New York, 2010.

\bibitem{BAJ}
J.A. Bondy, U.S.R. Murty, Graph Theory, Springer, New York, 2008.

\bibitem{Brou}
A.E. Brouwer, W.H. Haemers, Spectra of Graphs, Springer, Berlin, 2011.

\bibitem{BRU}
R.A. Brualdi, A.J. Hoffman, On the spectral radius of $(0,1)$-matrices, \emph{Linear Algebra Appl.} 65 (1985) 133--146.

\bibitem{Gods}
C. Godsil, G. Royle, Algebraic Graph Theory, in: Graduate Texts in Mathematics, vol. 207, Springer-Verlag, New York, 2001.

\bibitem{Li1}
Y. Li, L. Lu, Y. Peng, Spectral extremal graphs for the bowtie, \emph{Discrete Math.} 346 (2023) 113680.

\bibitem{Lu}
J. Lu, L. Lu, Y. Li, Spectral radius of graphs forbidden $C_7$ or $C_6^{\triangle}$, \emph{Discrete Math.} 347 (2024) 113781.

\bibitem{Min}
G. Min, Z. Lou, Q. Huang, A sharp upper bound on the spectral radius of $C_5$-free/ $C_6$-free graphs with given size, \emph{Linear Algebra Appl.} 640 (2022) 162--178.

\bibitem{NVB}
V. Nikiforov, Some inequalities for the largest eigenvalue of a graph, \emph{Combin. Probab. Comput.} 11 (2002) 179--189.

\bibitem{NVB1}
V. Nikiforov, Walks and the spectral radius of graphs, \emph{Linear Algebra Appl.} 418 (2006) 257--268.

\bibitem{NVBF}
V. Nikiforov, The maximum spectral radius of $C_4$-free graphs of given order and size, \emph{Linear Algebra Appl.} 430 (2009) 2898--2905.

\bibitem{NVBF3}
V. Nikiforov, More spectral bounds on the clique and independence numbers, \emph{J. Combin. Theory Ser. B} 99 (2009) 819--826.

\bibitem{NVB2}
V. Nikiforov, On a theorem of Nosal, arXiv:2104.12171, 2021.

\bibitem{NEE}
E. Nosal, Eigenvalues of Graphs (Master's thesis), University of Calgary, 1970.

\bibitem{Row}
P. Rowlinson, On the maximal index of graphs with a prescribed number of edges, \emph{Linear Algebra Appl.} 110 (1988) 43--53.


\bibitem{Wu}
B. Wu, E. Xiao, Y. Hong, The spectral radius of trees on $k$ pendant vertices, \emph{Linear Algebra Appl.} 395 (2005) 343--349.

\bibitem{Yu}
L. Yu, Y. Li, Y. Peng, Spectral extremal graphs for fan graphs, arXiv:2404.03423, 2024.

\bibitem{MZS}
M. Zhai, H. Lin, J. Shu, Spectral extrema of graphs with fixed size: cycles and complete bipartite graphs, \emph{European J. Combin.} 95 (2021) 103322.

\bibitem{Zhang}
Y. Zhang, L. Wang, On the spectral radius of graphs without a gem, manuscript submitted, March 19, 2024.	
\end{thebibliography}
\end{document}